\numberwithin{equation}{section}
\newtheorem{theorem}{Theorem}[section]
\newtheorem{lemma}[theorem]{Lemma}
\newtheorem{corollary}[theorem]{Corollary}
\newtheorem{assumption}[theorem]{Assumption}
\newtheorem*{remark}{Remark}
\title{Long-time behavior of logarithmic spiral vortex sheets with two branches}
\author{Minki Cho\thanks{Department of Mathematical Sciences, Seoul National University. E-mail: andbi@snu.ac.kr}}
\date{December 4, 2023}
\begin{document}

\maketitle

\begin{abstract}
    We consider logarithmic spiral vortex sheets consisting of two branches. Based on some simple assumptions that appear true by numerical computations, we fully classify their long-time behavior and asymptotics, where in all cases each branch decays to $0$ or blows up in finite time. Furthermore, we present illustrations determining which range of initial data corresponds to each case. We also determine the asymptotic stability of the symmetric and asymmetric self-similar spirals.
\end{abstract}

\section{Introduction}
\subsection{Logarithmic spiral vortex sheets}
Logarithmic spiral vortex sheets are the vortex sheets of 2D incompressible perfect fluids that are $M$ branches of logarithmic spirals. The case $M = 1$ with self-similarity was first considered by Prandtl \cite{Pra22} in 1922. Later, the symmetric self-similar spirals, where the $M$ spirals are distributed under rotational symmetry, were considered by Alexander \cite{Ale71} in 1971 and so named Alexander spirals. Despite the long history, it was only in 2021 that Cieślak et al. \cite{CKOa} provided sufficient conditions for the self-similar spirals to indeed give rise to weak solutions to the 2D incompressible Euler equation. The same authors also proved the existence of asymmetric self-similar spirals with $M$ branches where $M \in \{2,3,5,7,9\}$ in \cite{CKOb} and, in \cite{CKOc}, the linear instability of Alexander spirals with $M \geq 3$ as solutions to the Birkhoff-Rott equation \cite{Bir62, Rot56}, which has been a traditional method for analyzing vortex sheet evolution.

Jeong and Said \cite{JS} recently suggested a new framework for studying the logarithmic spiral vortex sheets. They considered vorticity solutions to the 2D incompressible Euler equation under logarithmic spiral scaling invariance, converting the equation into a nonlinear transport system on the one-dimensional torus. Then, they viewed the spiral vortex sheets as solutions to the new system consisting of a finite number of Dirac deltas, namely of the form $\sum_{j=0}^{M-1} I_j(t)\delta_{\theta_j}(t)$, and proved their well-posedness. Also, they provided a simple criterion for finite-time blowup of such solutions, namely that they blow up if and only if $\beta\sum I_j < 0$, where $\beta$ is a parameter determining the shape of the spirals $\theta - \beta\log r = \text{const.}$. Finally, they briefly discussed the case $M = 2$, where $I_1$, $I_2$, and $\theta = \theta_1 - \theta_2$ satisfy the ODE system
\begin{equation} \label{original}
    \left\{ \begin{aligned}
        I_1' &= 2K'(0)I_1^2 + 2K'(\theta)I_1I_2,\\
        I_2' &= 2K'(0)I_2^2 + 2K'(-\theta)I_1I_2,\\
        \theta' &= 2(K(0)-K(-\theta))I_1 + 2(K(\theta)-K(0))I_2.
    \end{aligned} \right.
\end{equation}
Here, $K(\theta)$ is defined as
\begin{equation} \label{K(x):def}
    K(\theta) = \frac{1}{4}\operatorname{Re}\left[\frac{e^{\frac{2(\beta-i)}{1+\beta^2}(\theta-\pi)}}{\sin\left(\frac{2\pi(1+i\beta)}{1+\beta^2}\right)}\right]
\end{equation}
for $0 \leq \theta \leq 2\pi$, and accordingly
\begin{equation} \label{K'(x):def}
    K'(\theta) = \frac{2\beta}{1+\beta^2}K(\theta) + \frac{1}{2(1+\beta^2)}\operatorname{Im}\left[\frac{e^{\frac{2(\beta-i)}{1+\beta^2}(\theta-\pi)}}{\sin\left(\frac{2\pi(1+i\beta)}{1+\beta^2}\right)}\right].
\end{equation}
They proved that a self-similar solution, where $\theta$ is a constant, uniquely exists if and only if
\begin{equation} \label{F(theta)}
    F(\theta) = K(0)(K'(-\theta)-K'(\theta)) + K(\theta)(K'(0)-K'(-\theta)) + K(-\theta)(K'(\theta)-K'(0))
\end{equation}
equals to zero, and that an asymmetric self-similar solution ($\theta \neq \pi$) exists for small enough $\beta$, consistent with the result of \cite{CKOb}.

\subsection{Main results}
In this paper, we focus on the ODE system (\ref{original}) to determine the long-time behavior of its solutions, which correspond to the logarithmic spiral vortex sheets with two branches. Our main conclusion is the following:

\begin{theorem}[Long-time behavior of solutions] \label{thm:longtime_original}
    Suppose $\beta > 0$ and $\beta \notin \{\beta_0,\beta^*,\beta_2,\beta_3\}$. Let $(I_1(t),I_2(t),\theta(t))$ be a solution to (\ref{original}) with initial data $(I_1(0),I_2(0),\theta(0))$ such that $I_1(0), I_2(0) \neq 0$ and $0 < \theta(0) < 2\pi$. Then, $\theta$ always converges to a constant. Meanwhile, for $t > 0$, the long-time behavior and asymptotics of $(I_1,I_2)$ correspond to one of the following (up to symmetry):
    \begin{itemize}
        \item $I_1, I_2 \to 0$ (not both negative) as $t \to +\infty$ and they decay as $O(1/t)$.
        \item $I_1 \to 0$ (may be negative), $I_2 \to +0$ as $t \to +\infty$. $I_2$ decays as $O(1/t)$, but $I_1$ decays as about $O(1/t^{c_1})$ for some constant $c_1=c_1(\beta)>1$.
        \item $I_1 \to -\infty$, $I_2 \to 0$ as $t \to t^*$ for some $t^* < +\infty$. $I_1$ blows up as $O(1/(t^*-t))$, while $I_2$ decays as about $O((t^*-t)^c)$ for some constant $0<c<1$ depending on $\beta$ and $\theta(0)$. (There are only finite options of $c$ when $\beta$ is fixed.)
        \item $I_1,I_2 \to -\infty$ as $t \to t^*$ for some $t^* < +\infty$ and they blow up as $O(1/(t^*-t))$.
        \item $I_1 \to -\infty$, $I_2 \to \pm\infty$ as $t \to t^*$ for some $t^* < +\infty$. $I_1$ blows up as $O(1/(t^*-t))$, but $I_2$ blows up as about $O(1/(t^*-t)^{c_2})$ for some constant $c_2=c_2(\beta) \in (0,1)$.
    \end{itemize}
    (By ``$I_j$ decays/blows up as about $O(r(t))$'' we mean that $\log I_j \sim \log r(t)$.) For $t < 0$, the long-time behavior is obtained by that of $(-I_1,-I_2,\theta)$ with time reversal ($t > 0$). For $\beta < 0$, the system is equivalent to that with $\beta \mapsto -\beta > 0$ and time reversal.
\end{theorem}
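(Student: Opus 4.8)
The plan is to reduce the three‑dimensional system \eqref{original} to something more tractable by exploiting its scaling structure. The right‑hand side of \eqref{original} is quadratic and homogeneous of degree two in $(I_1,I_2)$, while the $\theta$‑equation is homogeneous of degree one. This strongly suggests introducing a change of variables that separates the ``shape'' of the solution from its overall size: for instance, set $r = I_1 + I_2$ (or some comparable scale) together with a ratio coordinate such as $s = I_2/I_1$ and keep $\theta$. Under such a substitution the $\theta$‑ and $s$‑dynamics should decouple from the scalar $r$, because the $\theta'$ equation is degree one and $s' = (I_2'I_1 - I_1'I_2)/I_1^2$ becomes degree one as well after dividing by $r$; a further reparametrization of time (e.g.\ $d\tau = r\,dt$, or dividing out by $|I_1|$) should turn the $(s,\theta)$ subsystem into an autonomous planar flow on a cylinder. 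The long‑time behavior of $\theta$ and of the ratio is then governed entirely by this planar system, and the scalar $r$ is recovered by integrating a scalar Riccati‑type equation $r' = (\text{bounded function of }s,\theta)\,r^2$ along trajectories.

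\medskip

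First I would carry out this reduction carefully and identify the planar phase portrait of $(s,\theta)$ (equivalently of $\theta$ together with the sign pattern and ratio of the $I_j$). The fixed points of this planar system are exactly the self‑similar solutions, whose existence is controlled by the vanishing of $F(\theta)$ in \eqref{F(theta)}; the symmetric spiral $\theta=\pi$ is always a fixed point by the evident reflection symmetry $(I_1,I_2,\theta)\mapsto(I_2,I_1,2\pi-\theta)$ of \eqref{original}, and the asymmetric ones bifurcate off it. Second, I would linearize at each fixed point, compute eigenvalues (these give the exponents $c,c_1,c_2$ appearing in the statement — they are ratios of the scalar growth rate to the contraction/expansion rate of the ratio coordinate), and classify stability; the special values $\beta_0,\beta^*,\beta_2,\beta_3$ excluded in the hypothesis are presumably precisely the bifurcation/degeneracy thresholds where eigenvalues cross zero or coincide or where $F$ gains or loses a root, so away from them every fixed point is hyperbolic and the phase portrait is structurally stable. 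Third, I would show the planar flow has no periodic orbits or other recurrence (e.g.\ by producing a Lyapunov/monotonicity function, or by a Bendixson–Dulac argument, or simply because $\theta$ turns out to be eventually monotone), so that by Poincaré–Bendixson every trajectory limits to a fixed point; this is what yields ``$\theta$ always converges to a constant.''

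\medskip

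Once the limiting values of $(s,\theta)$ are known for each trajectory, I would recover the size and the blow‑up/decay dichotomy by integrating the scalar equation. Writing the reduced scalar equation schematically as $r' = g(s,\theta)\,r^2$ with $g \to g_\infty$ along the trajectory, the sign of $g_\infty$ decides between finite‑time blowup ($g_\infty>0$, giving $r\sim 1/(t^*-t)$) and algebraic decay ($g_\infty<0$, giving $r\sim 1/t$); this is the ODE‑level manifestation of the criterion $\beta\sum I_j<0$ for blowup quoted from \cite{JS}. The subtler asymptotics — the non‑integer exponents $c_1>1$, $0<c<1$, $c_2\in(0,1)$ attached to the ``slow'' component, and the statement that $\log I_j\sim\log r(t)$ — come from tracking how the ratio coordinate $s$ approaches its limit like a power of $r$ (or of $t^*-t$) as dictated by the ratio of eigenvalues at the destination fixed point. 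I expect the main obstacle to be exactly this last bookkeeping: matching each basin of attraction in the $(s,\theta)$ phase plane to the correct bullet in the theorem, handling the boundary/separatrix cases and the sign combinations of $(I_1,I_2)$ (which determine ``not both negative,'' ``may be negative,'' and the $\pm\infty$ alternatives), and proving that the formal power‑law exponents extracted from the linearization are genuine, which requires either a normal‑form computation or an invariant‑manifold argument near each hyperbolic fixed point to control the nonlinear corrections. The dependence of $c$ on $\theta(0)$ and the claim that only finitely many values of $c$ occur for fixed $\beta$ strongly indicate that these exponents are indexed by which fixed point a trajectory converges to, so the finiteness is just the finiteness of the fixed‑point set, while the continuous $\theta(0)$‑dependence reflects which separatrix‑bounded region the initial data lies in.
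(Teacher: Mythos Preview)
Your high-level strategy matches the paper's: reduce to a planar system via a ratio coordinate and a time reparametrization, classify equilibria, rule out cycles, apply Poincar\'e--Bendixson, then recover the scalar. But several concrete steps in your plan would not go through as written.

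\textbf{The reduction.} Using $r=I_1+I_2$ as the scalar is a bad choice: it can vanish and change sign when $I_1,I_2$ have opposite signs, making $d\tau=r\,dt$ singular. The paper instead reparametrizes by $ds=I_2\,dt$ (the set $\{I_2=0\}$ is invariant, so $I_2$ is sign-definite) and uses $\tilde R=I_1/I_2$ as the ratio coordinate; this gives a clean autonomous planar system in $(\tilde R,\tilde\theta)$.

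\textbf{The equilibria.} Your claim that ``the fixed points of this planar system are exactly the self-similar solutions, controlled by the vanishing of $F(\theta)$'' is wrong, and this omission is serious. The planar system has, in addition to the interior fixed points $(1,\pi)$ and $(\bar R,\bar\theta)$ coming from $F=0$, a family of \emph{boundary} equilibria: $(0,\theta_i)$ at each $\theta_i$ with $K(\theta_i)=K(0)$ (Lemma~\ref{lemma:K(x)=K(0)} gives up to three such $\theta_i$, plus $0$ and $2\pi$), and $(-1,0)$, $(-1,2\pi)$. These boundary equilibria are precisely the destinations that produce the non-integer exponents $c_1,c_2,c$ in the theorem; the finitely many values of $c$ are $K'(-\theta_0)/K'(0)$ indexed by the finitely many $\theta_0$ with $K(-\theta_0)=K(0)$, not by the self-similar fixed points. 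The thresholds $\beta_0,\beta_2,\beta_3$ are exactly where the number of solutions to $K(\theta)=K(0)$ changes.

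\textbf{Ruling out cycles.} None of your three proposed mechanisms is what the paper uses, and at least one is false: $\theta$ is \emph{not} eventually monotone in general (near the attractor $(1,\pi)$ trajectories can spiral). The paper's argument is: index theory forces any cycle to enclose a non-saddle equilibrium, and in $\tilde R>0$ the only candidate is $(1,\pi)$; one then traps the cycle in an explicit $\theta$-strip and derives a contradiction by comparing the slopes of the log-scaled nullclines $R_1,R_2$ along the cycle (this is Assumption~\ref{assumption:slope}, which, like Assumptions~\ref{assumption:xi}--\ref{assumption:F(x)}, is verified only numerically). You should be aware that the paper's proof is conditional on these numerical assumptions about $K$; a Bendixson--Dulac function, if you can find one, would be a genuine improvement.

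Finally, the asymptotic exponents are not read off from eigenvalue ratios at the destination but computed directly: after showing $(\tilde R,\tilde\theta)\to(R_0,\theta_0)$, one integrates $(\log I_2)'/I_2=2K'(0)+2K'(-\tilde\theta)\tilde R$ and $(\log|I_1|)'/I_2=2K'(0)\tilde R+2K'(\tilde\theta)$ with L'H\^opital to obtain the precise rates; the sign of $K'(\pm\theta_0)$ relative to $K'(0)$ (Lemma~\ref{lemma:K'(+0)>K'(-0)}, Assumption~\ref{assumption:K'(x)>K'(0)}) is what sorts the cases.
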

\begin{remark}
    The constants $\beta_0$, $\beta^*$, $\beta_2$, $\beta_3$, $c_1$, and $c_2$ are specified later on.
\end{remark}

Furthermore, we illustrate how the entire phase space is partitioned into regions corresponding to each case in Theorem \ref{thm:longtime_original}. These results upgrade \cite[Theorem~1.11]{JS} for the case $N = 2$ in that we determine the long-time behavior of each of $I_1$ and $I_2$ for all cases and specify the decay and blowup rates. Also, we reveal the existence of the cases $(I_1 \downarrow 0, I_2 \uparrow 0)$ and $(I_1 \uparrow +\infty, I_2 \downarrow -\infty)$ (and the symmetric ones), which is unanticipated since the general behavior is that $(I_1 \downarrow 0, I_2 \downarrow -\infty)$ when $\beta > 0$ and $I_1 > 0, I_2 < 0$, influenced by the fact that $K'(0)$ is a large negative number when $\beta > 0$ (See Lemma \ref{lemma:K'(0)<0}).

We first discuss the properties of $K$ we need in Section \ref{section:prop}, where some are left as assumptions while appearing true. In Section \ref{section:reparam}, we reparametrize the time variable of (\ref{original}) to obtain a two-dimensional ODE system, which is crucial since we can then apply the Poincaré-Bendixson theorem. We also investigate how the symmetry of (\ref{original}) is involved in the new ODE system. In Section \ref{section:equil}, we analyze the nullclines and equilibrium points of the new system and the local behavior at those points. Especially, we determine the asymptotic stability of the symmetric and asymmetric self-similar spirals. Subsequently, in Section \ref{section:longtime}, we prove the non-existence of cycles based on the nullcline analysis and employ the Poincaré-Bendixson theorem. Furthermore, we construct a topological graph with heteroclinic orbits to illustrate the partition of the whole phase plane. Finally, we recover the original solutions for (\ref{original}) from the reparametrized ones and determine their asymptotic behavior.

\textbf{Acknowledgments.} The author gives special thanks to In-Jee Jeong for suggesting the problem and providing various helpful discussions. This work was supported by the Undergraduate Research Internship (Fall 2023) through the College of Natural Sciences, Seoul National University.

\section{Properties of the kernel} \label{section:prop}
In this section, we discuss some properties of $K$. First of all, we note that
\begin{equation} \label{-beta}
    K(\theta) = \frac{1}{4}\operatorname{Re}\left[\frac{e^{\frac{2(\beta-i)}{1+\beta^2}(\theta-\pi)}}{\sin\left(\frac{2\pi(1+i\beta)}{1+\beta^2}\right)}\right]
    = \frac{1}{4}\operatorname{Re}\left[\frac{e^{\frac{2(\beta+i)}{1+\beta^2}(\theta-\pi)}}{\sin\left(\frac{2\pi(1-i\beta)}{1+\beta^2}\right)}\right]
    = \Tilde{K}(-\theta),
\end{equation}
where $\Tilde{K}$ denotes the kernel $K$ with $-\beta$ instead of $\beta$. (The second equality is obtained by taking complex conjugates.) Therefore, the ODE system (\ref{original}) with $-\beta$ is simply a time reversal of that with $\beta$. As we will later deal with the time-reversed system, we shall consider only the case $\beta > 0$ from now on.

From (\ref{K(x):def}) and (\ref{K'(x):def}), we can see that $K$ itself is periodic but $K'$ is not. So we define \[K'(+0) = \lim_{\theta\to0+}K'(\theta), \quad K'(-0) = \lim_{\theta\to0-}K'(\theta).\] Then $K'(0)$ in (\ref{original}) is defined as \[K'(0) = \frac{K'(+0)+K'(-0)}{2}.\]

\begin{lemma} \label{lemma:K'(0)<0}
    For all $\beta > 0$ and $0 < \alpha < 2\pi$, the following inequalities hold.
    \begin{enumerate}[label=(\roman*)]
        \item $K'(0) < 0$.
        \item $|K'(\alpha) + K'(-\alpha)| < -2K'(0)$.
        \item $|K'(\pi)| < -K'(0)$.
    \end{enumerate}
\end{lemma}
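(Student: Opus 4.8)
The plan is to place all three inequalities on a common footing by writing $K$ as the real part of a single elementary complex function. Set $z := \frac{2(\beta-i)}{1+\beta^2} = a-ib$ with $a = \frac{2\beta}{1+\beta^2} > 0$ and $b = \frac{2}{1+\beta^2} > 0$; then the identity $\sin(\pi i z) = i\sinh(\pi z)$ turns the denominator in (\ref{K(x):def}) into $\sin\!\big(\tfrac{2\pi(1+i\beta)}{1+\beta^2}\big) = i\sinh(\pi z)$, so that $K(\theta) = \operatorname{Re}\kappa(\theta)$ with $\kappa(\theta) = -\tfrac{i}{4}e^{z(\theta-\pi)}/\sinh(\pi z)$. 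Since $\kappa$ is entire in $\theta$, on $(0,2\pi)$ one has $K'(\theta) = \operatorname{Re}\kappa'(\theta)$ with $\kappa'(\theta) = -\tfrac{iz}{4}e^{z(\theta-\pi)}/\sinh(\pi z)$, and the quick checks $\operatorname{Re}[\kappa(2\pi)-\kappa(0)] = 0$, $\operatorname{Re}[\kappa'(2\pi)-\kappa'(0)] = -\tfrac{b}{2} \neq 0$ recover the stated periodicity of $K$ and the jump of $K'$ at $0$. Using $K'(0) = \tfrac12(K'(+0)+K'(-0)) = \tfrac12\operatorname{Re}[\kappa'(0)+\kappa'(2\pi)]$ and $K'(-\alpha) = K'(2\pi-\alpha)$ (from periodicity of $K$), I would record the closed forms
\[
K'(0) = -\tfrac14\operatorname{Re}\big[iz\coth(\pi z)\big], \quad K'(\pi) = -\tfrac14\operatorname{Re}\big[\tfrac{iz}{\sinh(\pi z)}\big], \quad K'(\alpha)+K'(-\alpha) = -\tfrac12\operatorname{Re}\big[\tfrac{iz\cosh(z(\pi-\alpha))}{\sinh(\pi z)}\big].
\]

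The engine driving every estimate is elementary. Because $iz = b+ia$ with $a,b>0$, for any nonzero $\zeta$ one has $\operatorname{Re}[iz/\zeta] = (b\operatorname{Re}\zeta + a\operatorname{Im}\zeta)/|\zeta|^2$, so the sign of such a real part is that of $b\operatorname{Re}\zeta + a\operatorname{Im}\zeta$. Multiplying by the conjugate of the denominator gives, for every $t>0$,
\[
\operatorname{Re}\coth(zt) = \frac{\sinh(2at)}{2D(t)}, \quad \operatorname{Im}\coth(zt) = \frac{\sin(2bt)}{2D(t)}, \quad \operatorname{Re}\tanh(zt) = \frac{\sinh(2at)}{2E(t)}, \quad \operatorname{Im}\tanh(zt) = \frac{-\sin(2bt)}{2E(t)},
\]
with $D(t) = \sinh^2(at)\cos^2(bt)+\cosh^2(at)\sin^2(bt) > 0$ and $E(t) = \cosh^2(at)\cos^2(bt)+\sinh^2(at)\sin^2(bt) > 0$. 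Thus $b\operatorname{Re}\coth(zt)+a\operatorname{Im}\coth(zt)$ has the sign of $b\sinh(2at)+a\sin(2bt)$, and the $\tanh$ analogue the sign of $b\sinh(2at)-a\sin(2bt)$. Both are positive for $t>0$: since $\sinh x > x$ we get $b\sinh(2at) > 2abt$, while $|\sin(2bt)| \le 2bt$ gives $a|\sin(2bt)| \le 2abt$, so $b\sinh(2at)\pm a\sin(2bt) \ge b\sinh(2at)-a|\sin(2bt)| > 0$.

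With this in hand the three parts follow. For (i), expanding $iz\coth(\pi z) = (b+ia)\frac{\cosh(\pi a)\sinh(\pi a)+i\cos(\pi b)\sin(\pi b)}{D(\pi)}$ yields $K'(0) = -\frac{b\sinh(2\pi a)-a\sin(2\pi b)}{8D(\pi)} < 0$ by the engine at $t=\pi$. For (ii), I would show both of $-2K'(0)\pm(K'(\alpha)+K'(-\alpha))$ are positive. Writing $p = \pi-\tfrac{\alpha}{2}$, $q = \tfrac{\alpha}{2}$ (so $p,q\in(0,\pi)$ and $p+q=\pi$), the sum-to-product identities $\cosh(\pi z)\mp\cosh(z(\pi-\alpha))$ equal $2\sinh(zp)\sinh(zq)$ and $2\cosh(zp)\cosh(zq)$ respectively, which combined with $\sinh(\pi z) = \sinh(zp)\cosh(zq)+\cosh(zp)\sinh(zq)$ collapse these quantities to
\[
-2K'(0)+\big(K'(\alpha)+K'(-\alpha)\big) = \operatorname{Re}\big[\tfrac{iz}{\coth(zp)+\coth(zq)}\big], \quad -2K'(0)-\big(K'(\alpha)+K'(-\alpha)\big) = \operatorname{Re}\big[\tfrac{iz}{\tanh(zp)+\tanh(zq)}\big].
\]
By the engine each right-hand side is a sum over $t\in\{p,q\}$ of strictly positive terms, giving $2K'(0) < K'(\alpha)+K'(-\alpha) < -2K'(0)$ and hence (ii). Finally (iii) is the special case $\alpha = \pi$ of (ii), since $K'(-\pi) = K'(\pi)$ forces $K'(\pi)+K'(-\pi) = 2K'(\pi)$ (equivalently it drops out of the same computation with $\coth(\tfrac{\pi z}{2})$, $\tanh(\tfrac{\pi z}{2})$ via $\cosh(\pi z)\pm1$).

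The one genuinely delicate point is (ii): the bound must hold uniformly in $\alpha\in(0,2\pi)$, and a naive expansion of $\operatorname{Re}[iz\cosh(z(\pi-\alpha))/\sinh(\pi z)]$ entangles the variables $at$ and $bt$ in a way that is not visibly sign-definite. The decisive simplification is the constraint $p+q=\pi$, which lets the $\coth$/$\tanh$ addition formula decouple the estimate into two single-variable contributions $b\sinh(2at)\pm a\sin(2bt)$, each controlled by $\sinh x > x \ge \sin x$. Beyond that I would only verify the routine positivity of $D(t),E(t)$ for $t>0$ and that $\sinh(\pi z)\neq 0$, which is automatic since $\operatorname{Re}(\pi z) = \pi a > 0$.
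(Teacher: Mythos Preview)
Your proof is correct and takes a genuinely different route from the paper's. The paper invokes an integral representation from \cite{JS}, namely $K'(0)=-4\beta\int K'(\theta)^2\,d\theta$ and $K'(\alpha)+K'(-\alpha)=-8\beta\int K'(\theta)K'(\theta+\alpha)\,d\theta$, so that (i) is immediate and (ii) follows in one line from Cauchy--Schwarz; (iii) is then the special case $\alpha=\pi$. Your argument is instead a direct closed-form computation: you rewrite everything as real parts of $iz$ times $\coth(\pi z)$ or $\cosh(z(\pi-\alpha))/\sinh(\pi z)$, and the decisive move is the sum-to-product step with $p=\pi-\alpha/2$, $q=\alpha/2$, which collapses $-2K'(0)\pm(K'(\alpha)+K'(-\alpha))$ into $\operatorname{Re}\!\big[iz/(\coth zp+\coth zq)\big]$ and $\operatorname{Re}\!\big[iz/(\tanh zp+\tanh zq)\big]$; positivity then reduces to the elementary $b\sinh(2at)\pm a\sin(2bt)>0$ via $\sinh x>x\ge|\sin x|$.

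The trade-off is clear. The paper's proof is shorter and more conceptual, but it leans on an external identity whose proof is not reproduced here; the strictness in Cauchy--Schwarz also tacitly uses that $K'(\cdot)$ and $K'(\cdot+\alpha)$ are not proportional for $0<\alpha<2\pi$. Your proof is entirely self-contained and yields the explicit formula $K'(0)=-\tfrac{b\sinh(2\pi a)-a\sin(2\pi b)}{8D(\pi)}$, which is occasionally useful in its own right. One small stylistic point: the ``engine'' paragraph sets up $\operatorname{Re}[iz/\zeta]=(b\operatorname{Re}\zeta+a\operatorname{Im}\zeta)/|\zeta|^2$, whereas in (i) you actually need $\operatorname{Re}[iz\cdot w]=b\operatorname{Re}w-a\operatorname{Im}w$ for $w=\coth(\pi z)$; the computation you wrote there is correct, but it would read more smoothly if you stated both forms (division and multiplication by $iz$) up front.
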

\begin{proof}
By \cite[Lemma~2.2]{JS}, \[K'(0) = -4\beta\int K'(\theta)^2 d\theta\] and \[K'(\alpha) + K'(-\alpha) = -4\beta\int K'(\theta)(K'(\theta+\alpha)+K'(\theta-\alpha)) d\theta = -8\beta\int K'(\theta)K'(\theta+\alpha) d\theta.\] Now (i) is clear. Also, Cauchy-Schwarz inequality yields \[\left|\int K'(\theta)K'(\theta+\alpha) d\theta\right| < \left(\int K'(\theta)^2 d\theta\right)^\frac{1}{2} \left(\int K'(\theta+\alpha)^2 d\theta\right)^\frac{1}{2} = \int K'(\theta)^2 d\theta,\] which implies (ii). Taking $\alpha = \pi$ gives (iii).
\end{proof}

To simplify (\ref{K(x):def}) and (\ref{K'(x):def}), define \[\frac{2\pi(\beta-i)}{1+\beta^2} = z\] and \[\frac{\theta}{\pi} = 2k, \quad k \in [0,1].\] Then
\begin{equation} \label{K(x):simple}
    K(\theta) = \frac{1}{4}\operatorname{Re}\left[\frac{e^{2kz}e^{-z}}{\sin(iz)}\right] = \frac{1}{2}\operatorname{Im}\left[\frac{e^{2kz}}{e^{2z}-1}\right]
\end{equation}
and
\begin{equation} \label{K'(x):simple}
    K'(\theta) = \frac{\beta}{1+\beta^2}\operatorname{Im}\left[\frac{e^{2kz}}{e^{2z}-1}\right] - \frac{1}{1+\beta^2}\operatorname{Re}\left[\frac{e^{2kz}}{e^{2z}-1}\right].
\end{equation}

\begin{lemma} \label{lemma:K'(+0)>K'(-0)}
    For all $\beta > 0$, \[K'(+0)-K'(-0) = \frac{1}{1+\beta^2}.\] Consequently, $K'(-0) < K'(+0) < -K'(-0)$.
\end{lemma}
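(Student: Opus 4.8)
The plan is to compute both one-sided limits directly from the simplified expression (\ref{K'(x):simple}), writing $g(k) = \frac{e^{2kz}}{e^{2z}-1}$ so that $K'(\theta) = \frac{\beta}{1+\beta^2}\operatorname{Im}[g(k)] - \frac{1}{1+\beta^2}\operatorname{Re}[g(k)]$ with $k = \theta/(2\pi)$. The conceptual crux is that, because $K$ is $2\pi$-periodic while $K'$ has a jump at the origin, the two one-sided limits correspond to the two endpoints of the parametrization: $K'(+0)$ is obtained by letting $k \to 0+$, whereas $K'(-0)$, by the periodicity that turns ``approaching $\theta = 0$ from below'' into ``approaching $\theta = 2\pi$ from below,'' is obtained by letting $k \to 1-$. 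I would therefore evaluate $g(0) = \frac{1}{e^{2z}-1}$ and $g(1) = \frac{e^{2z}}{e^{2z}-1}$.

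The key algebraic fact is then the exact telescoping identity $g(1) - g(0) = \frac{e^{2z}-1}{e^{2z}-1} = 1$, so that $g(0) - g(1) = -1$ is a real number. Substituting into the formula and using linearity of $\operatorname{Re}$ and $\operatorname{Im}$,
\[
K'(+0) - K'(-0) = \frac{\beta}{1+\beta^2}\operatorname{Im}[g(0)-g(1)] - \frac{1}{1+\beta^2}\operatorname{Re}[g(0)-g(1)] = -\frac{1}{1+\beta^2}\operatorname{Re}[-1] = \frac{1}{1+\beta^2},
\]
since $\operatorname{Im}[-1]=0$. This proves the stated identity, and in particular $K'(-0) < K'(+0)$ because the difference is strictly positive.

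For the remaining inequality $K'(+0) < -K'(-0)$, equivalently $K'(+0) + K'(-0) < 0$, I would invoke the definition $K'(0) = \tfrac12(K'(+0)+K'(-0))$ together with part (i) of Lemma \ref{lemma:K'(0)<0}, which gives $K'(0) < 0$ and hence $K'(+0) + K'(-0) = 2K'(0) < 0$. Chaining the two inequalities then yields $K'(-0) < K'(+0) < -K'(-0)$.

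The only genuine subtlety — and the step most prone to error — is the bookkeeping that identifies the one-sided limits with $k = 0$ and $k = 1$; once one accepts that approaching $\theta = 0$ from the left means approaching $2\pi$ from below, the computation is immediate and \emph{exact}, requiring no estimates whatsoever. Everything else collapses onto the telescoping identity $g(1) - g(0) = 1$ and a direct appeal to the previous lemma, so I expect no analytic obstacle beyond this indexing.
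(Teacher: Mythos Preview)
Your proof is correct and follows essentially the same approach as the paper: compute the one-sided limits from (\ref{K'(x):simple}) at $k=0$ and $k=1$, exploit the identity $\frac{e^{2z}}{e^{2z}-1} - \frac{1}{e^{2z}-1} = 1$ to get the exact difference $\frac{1}{1+\beta^2}$, and finish the chain of inequalities by invoking Lemma~\ref{lemma:K'(0)<0}. The only cosmetic difference is your introduction of the shorthand $g(k)$.
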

\begin{proof}
    From (\ref{K'(x):simple}), we have \[K'(+0) = \frac{\beta}{1+\beta^2}\operatorname{Im}\left[\frac{1}{e^{2z}-1}\right]-\frac{1}{1+\beta^2}\operatorname{Re}\left[\frac{1}{e^{2z}-1}\right]\] and
    \begin{align*}
        K'(-0) &= \frac{\beta}{1+\beta^2}\operatorname{Im}\left[\frac{e^{2z}}{e^{2z}-1}\right]-\frac{1}{1+\beta^2}\operatorname{Re}\left[\frac{e^{2z}}{e^{2z}-1}\right]\\
        &= \frac{\beta}{1+\beta^2}\operatorname{Im}\left[\frac{1}{e^{2z}-1}\right]-\frac{1}{1+\beta^2}\left(\operatorname{Re}\left[\frac{1}{e^{2z}-1}\right]+1\right)\\
        &= K'(+0)-\frac{1}{1+\beta^2}.
    \end{align*}
    Thus $K'(-0) < K'(+0)$. Combining this with Lemma \ref{lemma:K'(0)<0} yields $K'(-0) < 0$ and $K'(+0) < |K'(-0)|$.
\end{proof}

\begin{lemma} \label{lemma:K(x)=K(0)}
    There exist constants $0 < \beta_0 < \beta_2 < \beta_3$ such that the number of solutions to $K(\theta) = K(0)$ in $(0,2\pi)$ is
    \begin{itemize}
        \item $3$, if $\beta \in (0,\beta_0)$;
        \item $2$, if $\beta \in [\beta_0,\beta_2)$;
        \item $1$, if $\beta \in [\beta_2,\beta_3)$;
        \item $0$, if $\beta \in [\beta_3,\infty)$.
    \end{itemize}
\end{lemma}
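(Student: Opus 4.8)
The plan is to reduce the transcendental equation $K(\theta)=K(0)$ to a clean normal form via the simplified expression (\ref{K(x):simple}), and then to treat its roots as an oscillation‑counting problem. Writing $z=a-ib$ with $a=\tfrac{2\pi\beta}{1+\beta^2}>0$, $b=\tfrac{2\pi}{1+\beta^2}>0$ and $k=\theta/2\pi\in(0,1)$, subtracting $K(0)=\tfrac12\operatorname{Im}[1/(e^{2z}-1)]$ from (\ref{K(x):simple}) yields
\[
    K(\theta)-K(0)=\frac{1}{2}\operatorname{Im}\left[\frac{e^{2kz}-1}{e^{2z}-1}\right].
\]
Thus $K(\theta)=K(0)$ with $\theta\in(0,2\pi)$ is equivalent to $(e^{2kz}-1)/(e^{2z}-1)\in\mathbb{R}$ for $k\in(0,1)$; geometrically, the solutions are the interior intersections of the logarithmic‑spiral arc $k\mapsto e^{2kz}-1$ with the straight line through $0$ and its endpoint $e^{2z}-1$. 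Setting $\gamma=\arg[1/(e^{2z}-1)]$ and $u(k)=e^{2ka}\sin(\gamma-2kb)$, this is the same as counting the zeros of $\Phi(k)=u(k)-\sin\gamma$ on $(0,1)$, where $\Phi(0)=\Phi(1)=0$ correspond to the excluded endpoints $\theta=0,2\pi$.

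Next I would pin down the shape of $\Phi$. A direct computation gives $\Phi'(k)=u'(k)=2\sqrt{a^2+b^2}\,e^{2ka}\sin(\gamma-\eta-2kb)$ with $\eta=\arctan(b/a)=\tfrac{\pi}{2}-\arctan\beta\in(0,\tfrac\pi2)$, so the critical points of $\Phi$ (equivalently the extrema of $K$) are equally spaced in $k$ with gap $\pi/2b$, and their number $N$ in $(0,1)$ equals the number of integer multiples of $\pi$ inside the interval $(\gamma-\eta-2b,\gamma-\eta)$ of length $2b=4\pi/(1+\beta^2)$. Since the zeros of $\Phi'$ are simple, Rolle's theorem bounds the number of zeros of $\Phi$ in the closed interval by $N+1$; discarding the two endpoints, the interior count is at most $N-1$. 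As $\beta$ increases from $0$ to $\infty$ the window $2b$ shrinks monotonically from $4\pi$ to $0$, so $N\le\lceil 4/(1+\beta^2)\rceil$ drops through $4,3,2,1$, and the interior count is bounded by $3,2,1,0$ on the successive ranges where $4/(1+\beta^2)$ lies in $(3,4),(2,3),(1,2),(0,1)$, i.e. where $\beta$ lies in $(0,1/\sqrt3),(1/\sqrt3,1),(1,\sqrt3),(\sqrt3,\infty)$. This already produces the nonincreasing staircase structure, the sharp upper bound of three solutions, and the crude bounds $\beta_0\le 1/\sqrt3$, $\beta_2\le 1$, $\beta_3\le\sqrt3$.

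It remains to show each upper bound is attained just below its threshold and to locate the thresholds exactly. I would argue by tangency and monotonicity: an interior solution is created or destroyed only when a local extreme value of $K$ crosses the level $K(0)$ (equivalently when $\Phi$ acquires a double root) or when a critical point exits $(0,1)$ through an endpoint. Each extremum sits at an explicit $k_m=(\gamma-\eta-m\pi)/2b$, so the quantity $K(\theta_m)-K(0)$ is an explicit function of $\beta$ alone; setting it to zero yields one transcendental equation per threshold, whose root is $\beta_0$, $\beta_2$, or $\beta_3$. Granting that the total count is monotonically nonincreasing in $\beta$, exactly one solution is lost at each threshold, giving counts $3,2,1,0$ on $(0,\beta_0),[\beta_0,\beta_2),[\beta_2,\beta_3),[\beta_3,\infty)$ with $\beta_0<\beta_2<\beta_3$, which is the claim.

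The main obstacle is precisely this last monotonicity. The phase $\gamma(\beta)=-\arg(e^{2z}-1)$ and the extreme values depend transcendentally on $\beta$, and one must show each relevant ``extreme value minus $K(0)$'' crosses zero exactly once, with no reversal that would make the count non‑monotone. I would attack it by differentiating the threshold functions in $\beta$ and establishing sign‑definiteness of the derivatives, using Lemmas \ref{lemma:K'(0)<0} and \ref{lemma:K'(+0)>K'(-0)} to control $K$ near $\theta=0$. Where a fully analytic monotonicity proof is out of reach, I would isolate the precise monotonicity statement as a clearly delimited claim verified numerically, consistent with the paper's declared policy of leaving some plausible facts as assumptions.
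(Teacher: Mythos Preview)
Your reduction to $(e^{2kz}-1)/(e^{2z}-1)\in\mathbb{R}$ is exactly the paper's starting point, but from there the two arguments diverge and yours does not close.

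The paper does \emph{not} attack the oscillating function $\Phi$ with Rolle. Instead it rewrites the real--ratio condition as
\[
    f(k)\;=\;\cot(k\gamma)-e^{-k\gamma\beta}\csc(k\gamma)\;=\;f(1),\qquad \gamma=\tfrac{4\pi}{1+\beta^2},
\]
and checks by direct computation that $f'(k)<0$ everywhere it is defined, with jumps from $-\infty$ to $+\infty$ at $k\gamma\in\pi\mathbb{Z}$. For a strictly decreasing function with jumps, the number of solutions of $f(k)=c$ on $(0,1)$ is \emph{exactly} the number of jumps, adjusted by one according to the sign of $f(1)-f(0^+)=f(1)-\beta$. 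This gives exact counts, not just upper bounds, and reduces the entire threshold question to the sign of a single explicit function $g(\beta)=f(1)-\beta$. The paper then proves $g'(\beta)>0$ on each of $(0,1/\sqrt3)$, $(1/\sqrt3,1)$, $(1,\sqrt3)$ by an elementary (if slightly tedious) estimate, which yields the unique thresholds $\beta_0,\beta_2,\beta_3$ rigorously.

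Your Rolle argument, by contrast, only bounds the interior zeros of $\Phi$ by $N-1$; it does not show the bound is attained. To recover exact counts you are forced into the extremum--crossing analysis you sketch, and you concede that the decisive monotonicity in $\beta$ may have to be left as a numerically verified claim. That is the genuine gap: this lemma is \emph{not} one of the paper's numerical Assumptions~\ref{assumption:xi}--\ref{assumption:slope}; it is proved in full, and the mechanism that makes the proof go through is precisely the monotone reformulation $f(k)=f(1)$ that you are missing.
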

\begin{proof}
    By (\ref{K(x):simple}), $K(\theta) = K(0)$ is equivalent to
    \begin{equation} \label{real}
        \frac{e^{2kz}-1}{e^{2z}-1} \in \mathbb{R}.
    \end{equation}
    From the definition of $z$,
    \begin{align*}
        e^{2z} &= e^{\frac{4\pi\beta}{1+\beta^2}}\left(\cos\left(\frac{4\pi}{1+\beta^2}\right)-i\sin\left(\frac{4\pi}{1+\beta^2}\right)\right)\\
        &= e^{\gamma\beta}(\cos\gamma-i\sin\gamma),
    \end{align*}
    where $\gamma$ is defined as \[\gamma = \frac{4\pi}{1+\beta^2} \in (0,4\pi).\] Then, assuming $\gamma \neq n\pi$($n\in\mathbb{Z}$), (\ref{real}) is equivalent to \[\frac{e^{k\gamma\beta}\cos(k\gamma)-1}{e^{k\gamma\beta}\sin(k\gamma)} = \frac{e^{\gamma\beta}\cos\gamma-1}{e^{\gamma\beta}\sin\gamma}.\] Denote the left-hand side as $f(k)$. That is, \[f(k) = \cot(k\gamma) - e^{-k\gamma\beta}\csc(k\gamma)\] for $k \in [0,1]$ and $k\gamma \neq n\pi$($n \in \mathbb{Z}$). Now we have to determine the number of solutions to $f(k) = f(1)$ in $(0,1)$. First, we note that $f'(k)<0$. Indeed, \[f'(k) = \gamma\csc^2(k\gamma)e^{-k\gamma\beta}\left(-e^{k\gamma\beta}+\beta\sin(k\gamma)+\cos(k\gamma)\right)\] and it is straightforward to confirm \[\beta\sin(k\gamma)+\cos(k\gamma) < e^{k\gamma\beta}.\] Also, \[\lim_{k\to0+}f(k) = \lim_{k\to0+}\frac{e^{k\gamma\beta}(\cos(k\gamma)-1)+e^{k\gamma\beta}-1}{e^{k\gamma\beta}\sin(k\gamma)} = \beta.\] Thus, while $k$ moves from $0$ to $1$, the value of $f$ starts from $\beta$ and decreases, with `jumps' from $-\infty$ to $+\infty$ whenever $k\gamma = n\pi$, and ends at $f(1)$. The number of solutions to $f(k) = f(1)$ depends on the number of jumps and the sign of $f(1) - \beta$, which we denote as $g(\beta)$. That is, \[g(\beta) = \cot\gamma - e^{-\gamma\beta}\csc\gamma - \beta.\] We can observe that \[g'(\beta) = \gamma\csc^2\gamma \cdot \frac{2\beta}{1+\beta^2} \cdot e^{-\gamma\beta} \left(e^{\gamma\beta} + \frac{1-\beta^2}{2\beta} - \cos\gamma\right) - 1\] is positive for all $\beta \in (0,\frac{1}{\sqrt{3}}) \cup (\frac{1}{\sqrt{3}},1) \cup (1,\sqrt{3})$. Indeed, if $0 < \beta < \frac{1}{\sqrt{7}}$($\frac{7}{2}\pi < \gamma < 4\pi$), we have \[\frac{1+\beta^2}{2\gamma\beta}\sin^2\gamma = \frac{(1+\beta^2)^2}{8\pi\beta}\sin^2\left(4\pi\cdot\frac{\beta^2}{1+\beta^2}\right) < \frac{(1+\beta^2)^2}{8\pi\beta} \left(4\pi\cdot\frac{\beta^2}{1+\beta^2}\right)^2 = 2\pi\beta^3 < \frac{1}{\sqrt{7}}\] and $\frac{1-\beta^2}{2\beta} - \cos\gamma > 0$. If $\frac{1}{\sqrt{7}} \leq \beta < \sqrt{3}$($\pi < \gamma \leq \frac{7}{2}\pi$), we have \[\frac{1+\beta^2}{2\beta} \cdot \frac{\sin^2\gamma}{\gamma} < \frac{4}{\sqrt{7}} \cdot \frac{1}{4} = \frac{1}{\sqrt{7}},\] $\gamma\beta \geq \frac{\sqrt{7}}{2}\pi$, and $\frac{1-\beta^2}{2\beta} - \cos\gamma > -\frac{1}{\sqrt{3}} - 1$. In both cases we get \[e^{\gamma\beta} + \frac{1-\beta^2}{2\beta} - \cos\gamma > \frac{1+\beta^2}{2\gamma\beta}\sin^2\gamma \cdot e^{\gamma\beta},\] which is equivalent to $g'(\beta) > 0$.
    Now, we separate cases by the number of jumps.
    \begin{enumerate}[label=(\roman*)]
        \item $0 < \gamma < \pi$($\beta > \sqrt{3}$). There is no jump, and $f$ just decreases from $\beta$ to $f(1)$. So, in this case, there is no solution to $f(k) = f(1)$ in $(0,1)$.
        \item $\pi < \gamma < 2\pi$($1 < \beta < \sqrt{3}$). There is a jump when $k\gamma = \pi$. Since $g(\beta)$ is increasing from $-\infty$ to $+\infty$ in this interval, there is a unique $\beta_3$ such that $g(\beta_3) = 0$. If $1 < \beta < \beta_3$, then $f(1) < \beta$ so there is exactly one solution to $f(k) = f(1)$ in $(0,1)$, which lies in $(0,\pi/\gamma)$. If $\beta_3 \leq \beta < \sqrt{3}$, then $f(1) \geq \beta$ so there is no solution.
        \item $2\pi < \gamma < 3\pi$($\frac{1}{\sqrt{3}} < \beta < 1$). There are two jumps when $k\gamma = \pi, 2\pi$. Since $g(\beta)$ is increasing from $-\infty$ to $+\infty$ in this interval, there is a unique $\beta_2$ such that $g(\beta_2) = 0$. If $\frac{1}{\sqrt{3}} < \beta < \beta_2$, then $f(1) < \beta$ so there are two solutions to $f(k) = f(1)$ in $(0,1)$, which lie in $(0,\pi/\gamma)$ and $(\pi/\gamma,2\pi/\gamma)$, respectively. If $\beta_2 \leq \beta < 1$, then $f(1) \geq \beta$ so there is one solution, which lies in $(\pi/\gamma,2\pi/\gamma)$.
        \item $3\pi < \gamma < 4\pi$($0 < \beta < \frac{1}{\sqrt{3}}$). There are three jumps when $k\gamma = \pi, 2\pi, 3\pi$. Since $g(\beta)$ is increasing from $-\infty$ to $+\infty$ in this interval, there is a unique $\beta_0$ such that $g(\beta_0) = 0$. If $0 < \beta < \beta_0$, then $f(1) < \beta$ so there are three solutions to $f(k) = f(1)$ in $(0,1)$, which lie in $(0,\pi/\gamma)$, $(\pi/\gamma,2\pi/\gamma)$, and $(2\pi/\gamma,3\pi/\gamma)$, respectively. If $\beta_0 \leq \beta < \frac{1}{\sqrt{3}}$, then $f(1) \geq \beta$ so there are two solutions, which lie in $(\pi/\gamma,2\pi/\gamma)$ and $(2\pi/\gamma,3\pi/\gamma)$, respectively.
    \end{enumerate}
    Finally, the cases $\gamma = n\pi$($n=1,2,3$) can be simply handled.
\end{proof}

\begin{remark}
    According to numerical simulation, the approximate values of $\beta_0$, $\beta_2$, and $\beta_3$ are $0.44$, $0.87$, and $1.55$, respectively.
\end{remark}

Thus, we denote the solutions to $K(\theta) = K(0)$ as
\begin{itemize}
    \item $0 < \theta_1 < \theta_2 < \theta_3 < 2\pi$ if $\beta \in (0,\beta_0)$;
    \item $0 < \theta_2 < \theta_3 < 2\pi$ if $\beta \in [\beta_0,\beta_2)$;
    \item $0 < \theta_3 < 2\pi$ if $\beta \in [\beta_2,\beta_3)$.
\end{itemize}

Suppose $\beta > 0$ and $\beta \notin \{\beta_0, \beta_2, \beta_3\}$. By Rolle's theorem, at least one $\theta$ with $K'(\theta) = 0$ is between each pair of consecutive solutions. But in view of (\ref{K'(x):simple}), $K'(\theta) = 0$ is equivalent to \[\tan\arg\left(\frac{e^{2kz}}{e^{2z}-1}\right) = \tan(-k\gamma - \arg(e^{2z}-1)) = \frac{1}{\beta},\] following the previous definitions of $k$ and $\gamma$. Thus, the solutions to $K'(2k\pi) = 0$ must appear at intervals of $\pi/\gamma$. The solutions to $K''(2k\pi) = 0$ also have the same property since
\begin{align*}
    K''(\theta) = 0 \Longleftrightarrow{}& K(\theta) = \beta K'(\theta)\\
    \Longleftrightarrow{}& (1-\beta^2)\operatorname{Im}\left[\frac{e^{2kz}}{e^{2z}-1}\right] = -2\beta\operatorname{Re}\left[\frac{e^{2kz}}{e^{2z}-1}\right].
\end{align*}
These facts imply that there is exactly one $\theta$ with $K'(\theta) = 0$ between each pair of consecutive solutions to $K(\theta) = K(0)$, and the sign of $K'$ must change whenever it passes through such $\theta$. So, we can derive the following lemma.

\begin{lemma} \label{lemma:K'(x):sign}
    For $\beta > 0$, the sign of $K'(\theta)$ when $K(\theta) = K(0)$ is determined as follows:
    \begin{itemize}
        \item $K'(+0) < 0$ if $\beta \in (0,\beta_0)\cup(\beta_2,\beta_3)$; $K'(+0) > 0$ if $\beta \in (\beta_0,\beta_2)\cup(\beta_3,\infty)$; $K'(+0) = 0$ if $\beta \in \{\beta_0,\beta_2,\beta_3\}$.
        \item $K'(\theta_1) > 0$.
        \item $K'(\theta_2) < 0$.
        \item $K'(\theta_3) > 0$.
        \item $K'(-0) < 0$. (Lemma \ref{lemma:K'(+0)>K'(-0)})
    \end{itemize}
    Consequently, the sign of $K(\theta)-K(0)$ is determined as follows:
    \begin{itemize}
        \item If $\beta \in (0,\beta_0)$, $K(\theta)-K(0) > 0$ when $\theta \in (\theta_1,\theta_2)\cup(\theta_3,2\pi)$; $K(\theta)-K(0) < 0$ when $\theta \in (0,\theta_1)\cup(\theta_2,\theta_3)$.
        \item If $\beta \in [\beta_0,\beta_2)$, $K(\theta)-K(0) > 0$ when $\theta \in (0,\theta_2)\cup(\theta_3,2\pi)$; $K(\theta)-K(0) < 0$ when $\theta \in (\theta_2,\theta_3)$.
        \item If $\beta \in [\beta_2,\beta_3)$, $K(\theta)-K(0) > 0$ when $\theta \in (\theta_3,2\pi)$; $K(\theta)-K(0) < 0$ when $\theta \in (0,\theta_3)$.
        \item If $\beta \in [\beta_3,\infty)$, $K(\theta)-K(0) > 0$ for all $\theta \in (0,2\pi)$.
    \end{itemize}
\end{lemma}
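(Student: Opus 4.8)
The plan is to reduce the whole statement to a single anchoring fact, $K'(-0)<0$, propagated through the interlacing structure of the zeros of $K-K(0)$ and of $K'$ that was set up just above. Throughout I regard $0$ and $2\pi$ as solutions of $K(\theta)=K(0)$ too (since $K(0)=K(2\pi)$ by periodicity), and I write $n$ for the number of interior solutions, so that Lemma \ref{lemma:K(x)=K(0)} gives $n=3,2,1,0$ according as $\beta\in(0,\beta_0),(\beta_0,\beta_2),(\beta_2,\beta_3),(\beta_3,\infty)$. By the discussion preceding the statement, between each pair of consecutive solutions---boundary pairs included---there is exactly one zero of $K'$, at which $K'$ changes sign. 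Listing the solutions as $0=s_0<s_1<\dots<s_n<s_{n+1}=2\pi$ and the critical points as $c_1<\dots<c_{n+1}$, this gives the strict interlacing $0<c_1<s_1<c_2<\dots<s_n<c_{n+1}<2\pi$; consequently $K'$ has constant sign on each of the $n+2$ open intervals cut out by the $c_i$, and these signs alternate.

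Now I would anchor. By Lemma \ref{lemma:K'(+0)>K'(-0)}, together with $K'(-0)=\lim_{\theta\to2\pi^-}K'(\theta)$, the sign of $K'$ on the rightmost interval $(c_{n+1},2\pi)$ is negative. Propagating the alternation leftward fixes every sign. The rightmost solution occupies the interval immediately to the left of $(c_{n+1},2\pi)$, so $K'$ is positive there; the solution to its left gives a negative value, and the next one a positive value. Here the naming convention fixed after Lemma \ref{lemma:K(x)=K(0)}, which always keeps the \emph{largest} indices, is what matters: $\theta_3$ is the rightmost solution for every $n$, $\theta_2$ the second, $\theta_1$ the third, so the assignments $K'(\theta_3)>0$, $K'(\theta_2)<0$, $K'(\theta_1)>0$ are index-determined and uniform across all ranges of $\beta$. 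Finally $K'(+0)$ is the sign on the leftmost interval $(0,c_1)$, separated from $(c_{n+1},2\pi)$ by $n+1$ critical points, hence $\operatorname{sign}K'(+0)=(-1)^{n+1}\operatorname{sign}K'(-0)=(-1)^{n}$; feeding in the values of $n$ reproduces exactly the stated dichotomy for $K'(+0)$.

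For the endpoint values $\beta\in\{\beta_0,\beta_2,\beta_3\}$ I would show $K'(+0)=0$ directly. By the proof of Lemma \ref{lemma:K(x)=K(0)}, each threshold is where $g(\beta)=f(1)-\beta$ vanishes, i.e.\ where $f(1)=\lim_{k\to0+}f(k)$; as $f$ is strictly decreasing on its first branch, this is precisely the collision of the smallest solution (the one in $(0,\pi/\gamma)$) with the boundary $0$. Thus $\theta=0$ becomes a double root of $K-K(0)$, forcing $K'(+0)=0$. The sign of $K-K(0)$ then follows at once: on each open interval between consecutive solutions $K-K(0)$ is nonvanishing, hence of one sign, and that sign equals $\operatorname{sign}K'$ at the interval's left endpoint by monotonicity just to the right of it (using $K'(+0)$ on the first interval and the $K'(\theta_i)$ afterwards). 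Reading this off interval by interval yields the four tabulated cases; at the thresholds, where $K'(+0)=0$, the sign just to the right of $0$ is pinned by continuity in $\beta$, which is what makes the half-open $\beta$-ranges in the statement correct.

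I expect the main obstacle to be the threshold analysis. One must be certain that each of $\beta_0,\beta_2,\beta_3$ corresponds to a \emph{boundary} collision at $\theta=0$---rather than, say, two interior solutions merging---for this is what converts the vanishing of $g$ into $K'(+0)=0$; pinning this down requires the branch-by-branch bookkeeping of Lemma \ref{lemma:K(x)=K(0)}. By contrast, the interlacing-and-sign-change statement, which is the genuine analytic content, is already available from the paragraph just before the lemma, so the rest is a careful but essentially routine propagation of a single sign.
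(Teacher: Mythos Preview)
Your proposal is correct and is exactly the argument the paper has in mind: the paper does not give a formal proof of this lemma but simply says ``So, we can derive the following lemma'' after establishing the interlacing of zeros of $K-K(0)$ and of $K'$; you have faithfully spelled out that derivation by anchoring at $K'(-0)<0$ and propagating the alternation leftward, with the threshold cases handled via the collision of the smallest interior root with $\theta=0$.
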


Lemma \ref{lemma:K'(x):sign} can be well understood with Figure \ref{fig:K(x)}, the graphs of $K$ for $\beta = 0.3$, $0.6$, $1.2$, and $2$. These values are contained in $(0,\beta_0)$, $(\beta_0,\beta_2)$, $(\beta_2,\beta_3)$, and $(\beta_3,\infty)$, respectively.

\begin{figure}[t]
    \centering
    \includegraphics[width=12cm]{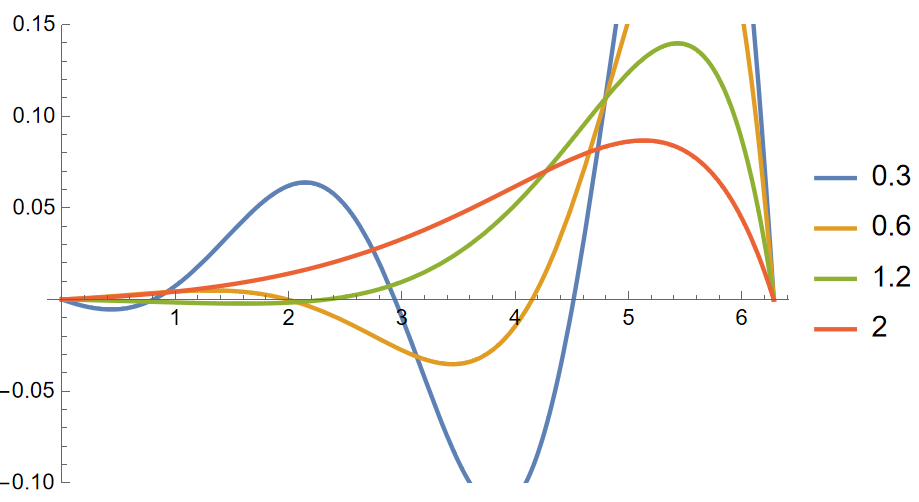}
    \caption{Graphs of $K(\theta)-K(0)$ for $\beta = 0.3$, $0.6$, $1.2$, and $2$}
    \label{fig:K(x)}
\end{figure}

Now, we further assume some properties of $K$. Although not proven formally, numerical simulation strongly suggests they are true.

First, note that $\theta_1 \to \frac{\pi}{2}$, $\theta_2 \to \pi$, $\theta_3 \to \frac{3\pi}{2}$ as $\beta \to 0$, and $\theta_3 = \pi$ when $\beta = 1$.

\begin{assumption} \label{assumption:xi}
    The angles $\theta_1$, $\theta_2$, $\theta_3$ are decreasing functions of $\beta > 0$. Also, $\theta_2 + \theta_3 > 2\pi$ when $\beta = \beta_0$. Thus, there exists a unique $\beta_1 \in (\beta_0,\beta_2)$ such that $\theta_2 + \theta_3 = 2\pi$. Meanwhile, $\theta_3 > \pi$ when $\beta = \beta_2$. Consequently, the following hold:
    \begin{itemize}
        \item If $\beta \in (0,\beta_0)$, then $\theta_1 + \theta_3 < 2\pi < \theta_2 + \theta_3$ and $\theta_2 < \pi$. Thus, \[0 < \theta_1 < 2\pi-\theta_3 < \theta_2 < 2\pi-\theta_2 < \theta_3 < 2\pi-\theta_1 < 2\pi.\]
        \item If $\beta \in (\beta_0,\beta_1)$, then $\theta_2 + \theta_3 > 2\pi$ and $\theta_2 < \pi$. Thus, \[0 < 2\pi-\theta_3 < \theta_2 < 2\pi-\theta_2 < \theta_3 < 2\pi.\]
        \item If $\beta \in (\beta_1,\beta_2)$, then $\theta_2 + \theta_3 < 2\pi$ and $\theta_3 > \pi$. Thus, \[0 < \theta_2 < 2\pi-\theta_3 < \theta_3 < 2\pi-\theta_2 < 2\pi.\]
        \item If $\beta \in (\beta_2,1)$, then $\theta_3 > \pi$. Thus, \[0 < 2\pi-\theta_3 < \theta_3 < 2\pi.\]
        \item If $\beta \in (1,\beta_3)$, then $\theta_3 < \pi$. Thus, \[0 < \theta_3 < 2\pi-\theta_3 < 2\pi.\]
    \end{itemize}
\end{assumption}

\begin{remark}
    According to numerical simulation, the approximate value of $\beta_1$ is $0.57$.
\end{remark}

Next, in view of (\ref{K'(x):simple}), we can see that generally the size of $K'(\theta) = K'(2k\pi)$ gets larger when $k$ increases by $\pi/\gamma$.

\begin{assumption} \label{assumption:K'(x)>K'(0)}
    For each $\beta > 0$, there exists a unique $\alpha\in(\pi,2\pi)$ satisfying the following:
    \begin{itemize}
        \item For $\theta\in(0,\alpha)$, $K'(\theta) > K'(0)$.
        \item $K'(\alpha) = K'(0)$.
        \item For $\theta\in(\alpha,2\pi)$, $K'(\theta) < K'(0)$.
    \end{itemize}
    Consequently, $K''(\alpha) < 0$.
\end{assumption}

Since there is no $\theta \in (\alpha,2\pi)$ such that $K'(\theta) = 0$, it follows that $\theta_3 < \alpha < 2\pi$ whenever $0 < \beta < \beta_3$.

Finally, note that the function $F$ specified in (\ref{F(theta)}) satisfies $F(0) = F(\pi) = F(2\pi) = 0$ and $F(\theta) = -F(2\pi-\theta)$ for all $0 \leq \theta \leq 2\pi$. In other words, the graph of $F$ is symmetric with respect to the point $(\pi,0)$.

\begin{assumption} \label{assumption:F(x)}
    There is a unique $\beta^*>0$ such that
    \begin{equation} \label{F'(pi)}
        F'(\pi) = 2K'(\pi)(K'(0)-K'(\pi)) - 2K''(\pi)(K(0)-K(\pi))
    \end{equation}
    equals to zero. Moreover, $F'(\pi)>0$ if $0 < \beta < \beta^*$; $F'(\pi)<0$ if $\beta > \beta^*$. Consequently, when $0 < \beta < \beta^*$, there exists a unique $\Bar{\theta} \in (0,\pi)$ such that $F(\Bar{\theta}) = 0$ (and then $F'(\Bar{\theta}) < 0$); if $\beta > \beta^*$, there is no such $\Bar{\theta}$. Meanwhile, $\beta^* \in (\beta_1,\beta_2)$.
\end{assumption}

\begin{remark}
    According to numerical simulation, the approximate value of $\beta^*$ is $0.71$.
\end{remark}

\begin{assumption} \label{assumption:slope}
    For all $0 < \beta < \beta_1$ and $\theta,\eta \in (\theta_2,2\pi-\theta_2)$, the following holds: \[\left|\frac{K''(\theta)}{K'(\theta)-K'(0)} + \frac{K''(-\theta)}{K'(-\theta)-K'(0)}\right| < \frac{K'(\eta)}{K(\eta)-K(0)} + \frac{K'(-\eta)}{K(-\eta)-K(0)}.\]
    Moreover, for all $\beta_1 < \beta < \beta^*$ and $\theta \in (\Bar{\theta},2\pi-\Bar{\theta})$, the following holds: \[\frac{K''(\theta)}{K'(\theta)-K'(0)} + \frac{K''(-\theta)}{K'(-\theta)-K'(0)} > 0.\]
\end{assumption}

\section{Time reparametrization and symmetry} \label{section:reparam}
Recall the original ODE system (\ref{original}). We clearly see that $\{I_1 = 0\}$ and $\{I_2 = 0\}$ are invariant sets, so the signs of $I_1$ and $I_2$ remain unchanged. Let us assume $I_2 \neq 0$ and write $I_1/I_2 = R$. Then \[R' = \frac{I_1'I_2-I_1I_2'}{I_2^2} = I_2\bigl[2(K'(0)-K'(-\theta))R^2+2(K'(\theta)-K'(0))R\bigr].\]
If we assume $I_2>0$, then we can reparametrize the time variable by \[s = \phi(t) = \int_0^t I_2(u)du\] so that $ds/dt = I_2(t)$.
Now, on the range of $\phi$, define $\Tilde{R} = R\circ\phi^{-1}$ and $\Tilde{\theta} = \theta\circ\phi^{-1}$. Then, from (\ref{original}) we have
\begin{equation} \label{reparam}
    \left\{ \begin{aligned}
        \Tilde{R}' &= 2(K'(0)-K'(-\Tilde{\theta}))\Tilde{R}^2 + 2(K'(\Tilde{\theta})-K'(0))\Tilde{R},\\
        \Tilde{\theta}' &= 2(K(0)-K(-\Tilde{\theta}))\Tilde{R} + 2(K(\Tilde{\theta})-K(0)).
    \end{aligned} \right.
\end{equation}
Let us denote the new ODE system (\ref{reparam}) by \[(\Tilde{R}',\Tilde{\theta}') = f(\Tilde{R},\Tilde{\theta}).\]
For $\Tilde{R} > 0$, we can further simplify the ODE system by considering \[\Tilde{A} = \log\Tilde{R}.\] Then the ODE system (\ref{reparam}) can be rewritten into
\begin{equation} \label{reparam_log_1}
    \left\{ \begin{aligned}
        \Tilde{A}' &= 2(K'(0)-K'(-\Tilde{\theta}))\exp(\Tilde{A}) + 2(K'(\Tilde{\theta})-K'(0)),\\
        \Tilde{\theta}' &= 2(K(0)-K(-\Tilde{\theta}))\exp(\Tilde{A}) + 2(K(\Tilde{\theta})-K(0)).
    \end{aligned} \right.
\end{equation}
Denote the ODE system (\ref{reparam_log_1}) by \[(\Tilde{A}',\Tilde{\theta}') = g_1(\Tilde{A},\Tilde{\theta}).\]
For $\Tilde{R} < 0$, we can rather consider \[\Tilde{A} = \log(-\Tilde{R})\] and write
\begin{equation} \label{reparam_log_2}
    \left\{ \begin{aligned}
        \Tilde{A}' &= 2(K'(0)-K'(-\Tilde{\theta}))(-\exp(\Tilde{A})) + 2(K'(\Tilde{\theta})-K'(0)),\\
        \Tilde{\theta}' &= 2(K(0)-K(-\Tilde{\theta}))(-\exp(\Tilde{A})) + 2(K(\Tilde{\theta})-K(0)).
    \end{aligned} \right.
\end{equation}
Denote this system (\ref{reparam_log_2}) by \[(\Tilde{A}',\Tilde{\theta}') = g_2(\Tilde{A},\Tilde{\theta}).\]

Our systems possess the following symmetries:
\begin{itemize}
    \item $(I_1,I_2,\theta)$ and $(-I_1,-I_2,\theta)$. They correspond to the same $(\Tilde{R},\Tilde{\theta})$, but they have the opposite signs of the reparametrized time variable $s=\int I_2$. Thus, on the phase plane of (\ref{reparam}), they move along the same orbit but in opposite directions. Note that in view of (\ref{original}), $(-I_1,-I_2,\theta)$ with time variable $-t$ obeys the same ODE with $(I_1,I_2,\theta)$ with time variable $t$.
    \item $(\Tilde{R},\Tilde{\theta})$ and $(1/\Tilde{R},-\Tilde{\theta})$, for $\Tilde{R} > 0$. In this case, we can readily see that \[g_1(-\Tilde{A},-\Tilde{\theta}) = -\exp(-\Tilde{A})g_1(\Tilde{A},\Tilde{\theta}).\] This implies that on the phase plane of (\ref{reparam_log_1}), they move symmetrically with respect to $(0,\pi)$, but only with different velocities.
    \item $(\Tilde{R},\Tilde{\theta})$ and $(1/\Tilde{R},-\Tilde{\theta})$, for $\Tilde{R} < 0$. In this case, we similarly have \[g_2(-\Tilde{A},-\Tilde{\theta}) = \exp(-\Tilde{A})g_2(\Tilde{A},\Tilde{\theta}).\] This implies that on the phase plane of (\ref{reparam_log_2}), they move along two orbits which are symmetric with respect to $(0,\pi)$, but each point moves with the direction opposite to the symmetric one.
\end{itemize}
Note that the second and third cases correspond to the symmetry of $(I_1,I_2,\theta)$ and $(I_2,I_1,-\theta)$ in (\ref{original}). The interpretation of these symmetries will become more evident in the later discussions.

\section{Equilibria} \label{section:equil}
In this section, we obtain the following result about the equilibrium points of (\ref{reparam}).
\begin{theorem}[Equilibria of reparametrized system] \label{thm:equil}
    The reparametrized ODE system (\ref{reparam}) has the following equilibrium points, depending on the value of $\beta > 0$.
    \begin{itemize}
        \item $(1,\pi)$, an attractor if $0 < \beta < \beta^*$; a repeller if $\beta > \beta^*$.
        \item $(\Bar{R},\Bar{\theta})$ and $(1/\Bar{R},2\pi-\Bar{\theta})$, a pair of saddle points, for $0 < \beta < \beta^*$.
        \item $(0,2\pi)$, an attractor, for all $\beta > 0$.
        \item $(0,\theta_3)$, a repeller, for $0 < \beta < \beta_3$.
        \item $(0,\theta_2)$, a saddle point, for $0 < \beta < \beta_2$.
        \item $(0,\theta_1)$, a repeller, for $0 < \beta < \beta_0$.
        \item $(0,0)$, a saddle if $\beta \in (0,\beta_0)\cup(\beta_2,\beta_3)$; a repeller if $\beta \in (\beta_0,\beta_2)\cup(\beta_3,\infty)$.
        \item $(-1,0)$ and $(-1,2\pi)$, saddle points, for all $\beta > 0$.
    \end{itemize}
\end{theorem}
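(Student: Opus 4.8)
The plan is to find all equilibria of the reparametrized system \eqref{reparam} and then classify each by linearization. Equilibria satisfy $\Tilde{R}'=0$ and $\Tilde{\theta}'=0$. The first equation factors as $2\Tilde{R}\bigl[(K'(0)-K'(-\Tilde{\theta}))\Tilde{R}+(K'(\Tilde{\theta})-K'(0))\bigr]=0$, so either $\Tilde{R}=0$ or $\Tilde{R}=\frac{K'(\Tilde{\theta})-K'(0)}{K'(0)-K'(-\Tilde{\theta})}$. I would treat these two families separately. On the line $\{\Tilde{R}=0\}$, the second equation reduces to $K(\Tilde{\theta})=K(0)$, whose solutions are exactly $\Tilde{\theta}\in\{0,\theta_1,\theta_2,\theta_3,2\pi\}$ (with the appropriate subset surviving for each range of $\beta$) by Lemma~\ref{lemma:K(x)=K(0)}; this immediately produces the equilibria $(0,0)$, $(0,\theta_1)$, $(0,\theta_2)$, $(0,\theta_3)$, and $(0,2\pi)$ with exactly the $\beta$-dependence claimed. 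For the off-axis family, I would substitute $\Tilde{R}=\frac{K'(\Tilde{\theta})-K'(0)}{K'(0)-K'(-\Tilde{\theta})}$ into $\Tilde{\theta}'=0$ and show after clearing denominators that the resulting condition is precisely $F(\Tilde{\theta})=0$ with $F$ as in \eqref{F(theta)}; using $F(\pi)=0$ always and Assumption~\ref{assumption:F(x)} (which gives the extra root $\Bar{\theta}\in(0,\pi)$ and its reflection $2\pi-\Bar{\theta}$ exactly when $0<\beta<\beta^*$), this yields $(1,\pi)$ for all $\beta$ and the saddle pair $(\Bar{R},\Bar{\theta})$, $(1/\Bar{R},2\pi-\Bar{\theta})$ for $0<\beta<\beta^*$. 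I must also account for the $\Tilde{R}<0$ equilibria $(-1,0)$ and $(-1,2\pi)$: here $\Tilde{\theta}\in\{0,2\pi\}$ forces $K(\Tilde{\theta})=K(0)$, and one checks $\Tilde{R}=\frac{K'(0)-K'(+0)}{K'(0)-K'(-0)}$ evaluates to $-1$ (using $K'(0)=\tfrac12(K'(+0)+K'(-0))$), consistent with Lemma~\ref{lemma:K'(+0)>K'(-0)}.

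For the stability classification I would compute the Jacobian $Df$ of \eqref{reparam} at each equilibrium and read off the signs of its eigenvalues (attractor if both negative real part, repeller if both positive, saddle if real of opposite sign). For the on-axis points $(0,\Tilde{\theta}_*)$ the Jacobian is triangular: the $\partial\Tilde{R}'/\partial\Tilde{R}$ entry equals $2(K'(\Tilde{\theta}_*)-K'(0))$ and the $\partial\Tilde{\theta}'/\partial\Tilde{\theta}$ entry equals $2(K'(\Tilde{\theta}_*)+K'(-\Tilde{\theta}_*))$ (the $\partial\Tilde{R}'/\partial\Tilde{\theta}$ cross term is irrelevant to the eigenvalues because of the zero $\Tilde{R}$-column structure), so the two eigenvalues are exactly these quantities. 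The sign of the first is governed by Lemma~\ref{lemma:K'(x):sign} (e.g.\ $K'(\theta_1)>0$, $K'(\theta_2)<0$, $K'(\theta_3)>0$, and the $\beta$-dependent sign of $K'(+0)$), and the sign of the second by $K'(\Tilde{\theta}_*)+K'(-\Tilde{\theta}_*)$, which Lemma~\ref{lemma:K'(0)<0}(ii) and Assumption~\ref{assumption:K'(x)>K'(0)} control; matching signs against the table gives repeller at $(0,\theta_1)$, $(0,\theta_3)$, saddle at $(0,\theta_2)$, attractor at $(0,2\pi)$, and the stated dichotomy at $(0,0)$.

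The genuinely nontrivial classifications are the interior point $(1,\pi)$ and the saddle pair. At $(1,\pi)$ I would exploit the symmetry $\Tilde{\theta}\mapsto\pi$ (so $K'(-\Tilde{\theta})=K'(\Tilde{\theta})$, $K(-\Tilde{\theta})=K(\Tilde{\theta})$ at $\Tilde{\theta}=\pi$) to simplify the Jacobian, compute its trace and determinant, and tie the sign of $\det Df$ or of an eigenvalue crossing to $F'(\pi)$; the whole point of Assumption~\ref{assumption:F(x)} is that $F'(\pi)$ changes sign exactly at $\beta^*$, which is what flips $(1,\pi)$ from attractor to repeller. For the saddle pair, I would verify $\det Df<0$ (opposite-sign eigenvalues) — this should follow because a transversal crossing of the nullcline $F(\Tilde{\theta})=0$ at $\Bar{\theta}$ with $F'(\Bar{\theta})<0$ (again Assumption~\ref{assumption:F(x)}) forces the product of eigenvalues negative; the symmetric partner is handled by the involution $(\Tilde{R},\Tilde{\theta})\mapsto(1/\Tilde{R},2\pi-\Tilde{\theta})$ recorded in Section~\ref{section:reparam}, which conjugates one saddle to the other and so transfers its type. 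I expect the main obstacle to be the $(1,\pi)$ computation: expressing trace and determinant of the Jacobian cleanly enough that their signs can be pinned to the single scalar $F'(\pi)$, and in particular confirming that it is an eigenvalue sign change (not a Hopf-type imaginary crossing) that occurs at $\beta^*$ — i.e.\ checking that the determinant, not merely the trace, is what vanishes there, so that the transition is attractor-to-repeller through the nullcline geometry rather than through a spurious center.
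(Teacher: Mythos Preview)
Your overall strategy coincides with the paper's: locate the equilibria as nullcline intersections (the three families you list are exactly the paper's), then classify each by the Jacobian. But there is a concrete computational error in your on-axis linearization that breaks several classifications. The $(2,2)$-entry of $Df$ is
\[
\frac{\partial\Tilde{\theta}'}{\partial\Tilde{\theta}} = 2K'(-\Tilde{\theta})\Tilde{R} + 2K'(\Tilde{\theta}),
\]
so at $(0,\theta_*)$ it equals $2K'(\theta_*)$, not $2\bigl(K'(\theta_*)+K'(-\theta_*)\bigr)$ as you wrote; the $K'(-\theta_*)$ term carries a factor of $\Tilde{R}$ and drops out. With the correct diagonal eigenvalues $2(K'(\theta_*)-K'(0))$ and $2K'(\theta_*)$, the classification follows purely from the sign of $K'(\theta_*)$ (Lemma~\ref{lemma:K'(x):sign}) together with $K'(\theta_*)>K'(0)$ for $\theta_*<\alpha$ (Assumption~\ref{assumption:K'(x)>K'(0)}). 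Your formula, by contrast, gives $2(K'(+0)+K'(-0))=4K'(0)<0$ at $(0,0)$ for \emph{every} $\beta$, which would force $(0,0)$ to be a saddle always and miss the stated dichotomy; and at $(0,\theta_2)$, $(0,\theta_3)$ neither Lemma~\ref{lemma:K'(0)<0}(ii) nor Assumption~\ref{assumption:K'(x)>K'(0)} pins down the sign of $K'(\theta_i)+K'(-\theta_i)$, so your argument would stall there.

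A second point concerns $(1,\pi)$. Your plan---compute trace and determinant and tie $\det Df(1,\pi)$ to $F'(\pi)$---is precisely the paper's, and indeed $\det Df(1,\pi)=4F'(\pi)$. But the trace is $2(K'(0)+K'(\pi))$, which Lemma~\ref{lemma:K'(0)<0}(iii) makes negative for all $\beta>0$. Hence when $F'(\pi)<0$ the eigenvalues are real of opposite sign: $(1,\pi)$ becomes a \emph{saddle}, not a repeller. A determinant sign change through zero with a fixed negative trace is an attractor-to-saddle transition, not attractor-to-repeller as you anticipate; this is in fact what the paper's own computation in Case~1 of Section~\ref{section:equil} yields, the word ``repeller'' in the theorem statement notwithstanding.
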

\begin{remark}
    Note that this result implies that the asymmetric self-similar spiral with two branches, whose existence was established in \cite{CKOb} and \cite{JS}, corresponding to $(\Bar{R},\Bar{\theta})$ or equivalently $(1/\Bar{R},2\pi-\Bar{\theta})$, is always unstable. On the other hand, the symmetric one (Alexander spiral), corresponding to $(1,\pi)$, is stable if the asymmetric one exists ($0 < \beta < \beta^*$), and unstable otherwise ($\beta > \beta^*$).
\end{remark}

\subsection{Nullclines and equilibrium points}
We first determine the locations of all equilibrium points of (\ref{reparam}). They are precisely the intersection points of two nullclines, $\{\Tilde{R}'=0\}$ and $\{\Tilde{\theta}'=0\}$. First, $\Tilde{R}' = 0$ is equivalent to $\Tilde{R} = 0$ or $\Tilde{R} = R_1(\Tilde{\theta})$, where $R_1$ is defined as \[R_1(\theta) = \frac{K'(\theta)-K'(0)}{K'(-\theta)-K'(0)}\] for each $\theta \neq 2\pi-\alpha$. (Note that $K'(-\theta) - K'(0)$ and $K'(\theta) - K'(0)$ cannot be both $0$ by Assumption \ref{assumption:K'(x)>K'(0)}.) Similarly, $\Tilde{\theta}' = 0$ is equivalent to $\Tilde{\theta} = 0, 2\pi$ or $\Tilde{R} = R_2(\Tilde{\theta})$, where $R_2$ is defined as \[R_2(\theta) = \frac{K(\theta)-K(0)}{K(-\theta)-K(0)}\] for each $\theta$ with $K(-\theta) \neq K(0)$. (Again, note that $K(-\theta) - K(0)$ and $K(\theta) - K(0)$ cannot be both $0$ unless $\theta = 0, 2\pi$ or $\beta = \beta_1, 1$, according to Assumption \ref{assumption:xi}.) Thus, each equilibrium point $(R_0,\theta_0)$ must satisfy one of the following:
\begin{itemize}
    \item $R_0 = R_1(\theta_0) = R_2(\theta_0)$.
    \item $R_0 = 0$ and $K(\theta_0) = K(0)$.
    \item $\theta_0 = 0, 2\pi$ and $R_0 = R_2(0) = R_2(2\pi) = -1$.
\end{itemize}
By Lemma \ref{lemma:K(x)=K(0)}, the points $(0,2\pi)$, $(0,\theta_3)$, $(0,\theta_2)$, $(0,\theta_1)$, and $(0,0)$, if each exists, compose the second case. Meanwhile, $R_1(\theta_0) = R_2(\theta_0)$ implies $F(\theta_0) = 0$, where $F$ is the function specified in (\ref{F(theta)}). Thus, by Assumption \ref{assumption:F(x)}, the first case consists of $(1,\pi)$, $(\Bar{R},\Bar{\theta})$, and $(1/\Bar{R},2\pi-\Bar{\theta})$ (the latter two exist if and only if $0 < \beta < \beta^*$), where $\Bar{R} = R_1(\Bar{\theta}) = R_2(\Bar{\theta})$. (Note that if $R_1(\theta)$ and $R_1(-\theta)$ are both defined, then $R_1(-\theta) = 1/R_1(\theta)$, and the same holds for $R_2$. This means that the intersection points of $R_1$ and $R_2$ must appear symmetrically.) So, we found all the equilibrium points.

Now, we sketch the graphs of the nullclines and check how they divide the whole phase plane and how they meet at the equilibrium points we found. This is useful because each divided region contains exactly one of the four directions($\nearrow, \nwarrow, \swarrow, \searrow$) of the vector field $f$. First, the graph of $R_1$ does not vary significantly with $\beta$. Assumption \ref{assumption:K'(x)>K'(0)} readily yields the following:
\begin{itemize}
    \item If $0 < \theta < 2\pi-\alpha$, then $R_1(\theta) < -1$.
    \item If $2\pi-\alpha < \theta < \alpha$, then $R_1(\theta) > 0$.
    \item If $\alpha < \theta < 2\pi$, then $-1 < R_1(\theta) < 0$.
\end{itemize}
What always holds is that the graphs of $R_1$ and $R_2$ cannot intersect in the region $R < 0$. Indeed, if $0 < \theta < 2\pi-\alpha$, then the mean value theorem and Assumption \ref{assumption:K'(x)>K'(0)} yield $K(-\theta) - K(0) > -K'(0)\theta > K(0) - K(\theta)$, which gives $R_2(\theta) > -1$. So the two graphs cannot meet in $\{0 < \theta < 2\pi-\alpha\}$, and in $\{\alpha < \theta < 2\pi\}$ by symmetry.

However, the graph of $R_2$ differs substantially, depending on which range $\beta$ belongs to. The function $R_2$ has vertical asymptotes $\theta = 2\pi-\theta_i$ ($i = 1, 2, 3$, depending on $\beta$), which separate the graph of $R_2$ into connected components. The value of $R_2$ jumps from $+\infty$ to $-\infty$, or from $-\infty$ to $+\infty$, when it passes through a vertical asymptote. Also, the sign of $R_2$ changes when it passes through $\theta_i$ ($i = 1, 2, 3$, again depending on $\beta$). Assumption \ref{assumption:xi} determines the order of these turning points. Finally, the sign of the leftmost part, $\lim_{\theta\to0+} R_2(\theta)$, is equal to that of $K'(+0)$, since $K'(-0) < 0$ by Lemma \ref{lemma:K'(+0)>K'(-0)}. We can sketch the graph of $R_2$ for each range of $\beta$ by considering all these.

\begin{figure}[t]
    \centering
    \begin{subfigure}{0.3\textwidth}
        \centering
        \includegraphics[width=\textwidth]{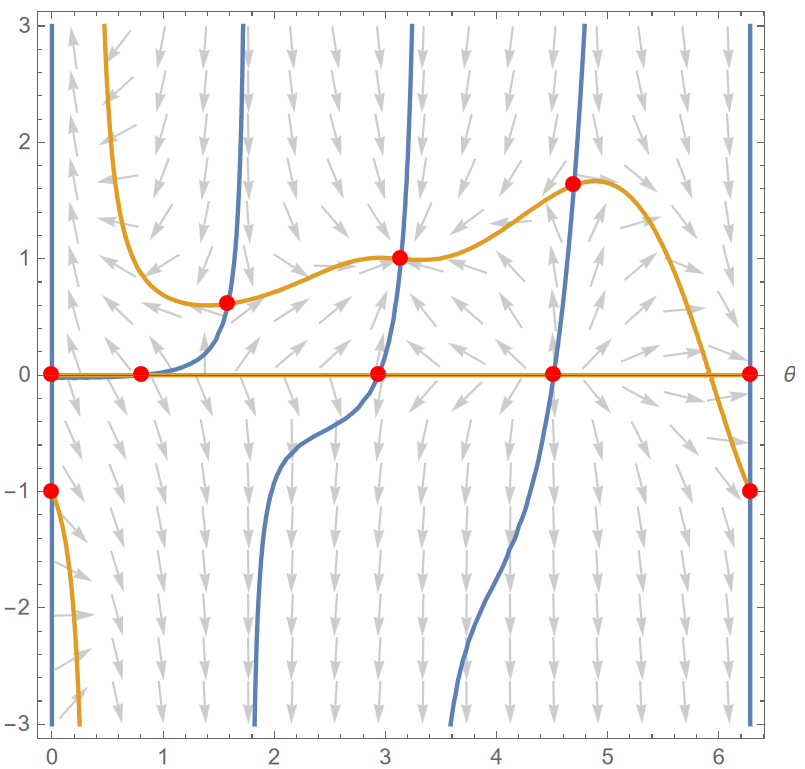}
        \caption{$\beta = 0.3 \in (0,\beta_0)$}
        \label{fig:phase_plane_0.3}
    \end{subfigure}
    \hfill
    \begin{subfigure}{0.3\textwidth}
        \centering
        \includegraphics[width=\textwidth]{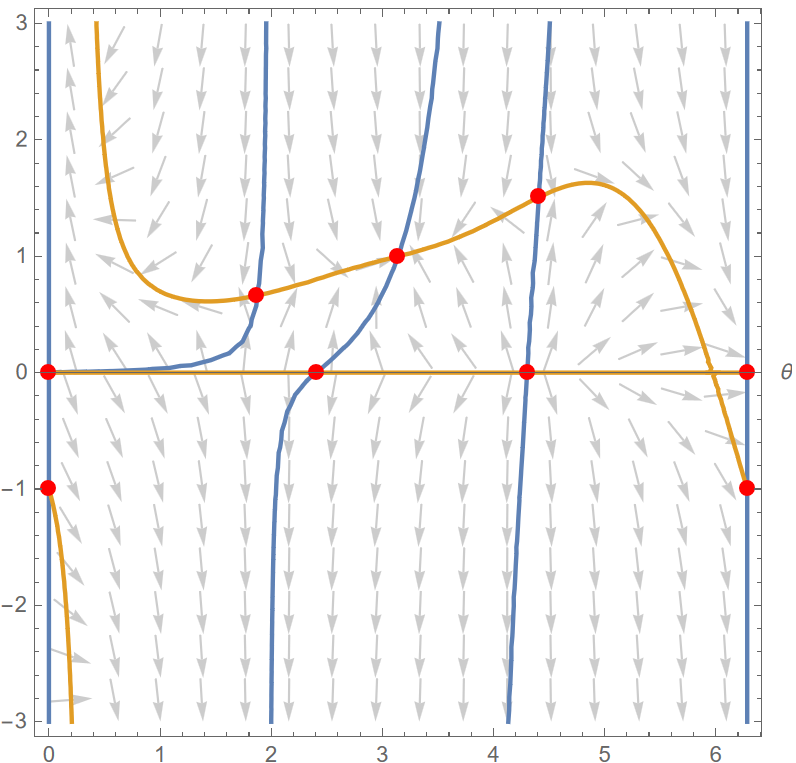}
        \caption{$\beta = 0.5 \in (\beta_0,\beta_1)$}
        \label{fig:phase_plane_0.5}
    \end{subfigure}
    \hfill
    \begin{subfigure}{0.3\textwidth}
        \centering
        \includegraphics[width=\textwidth]{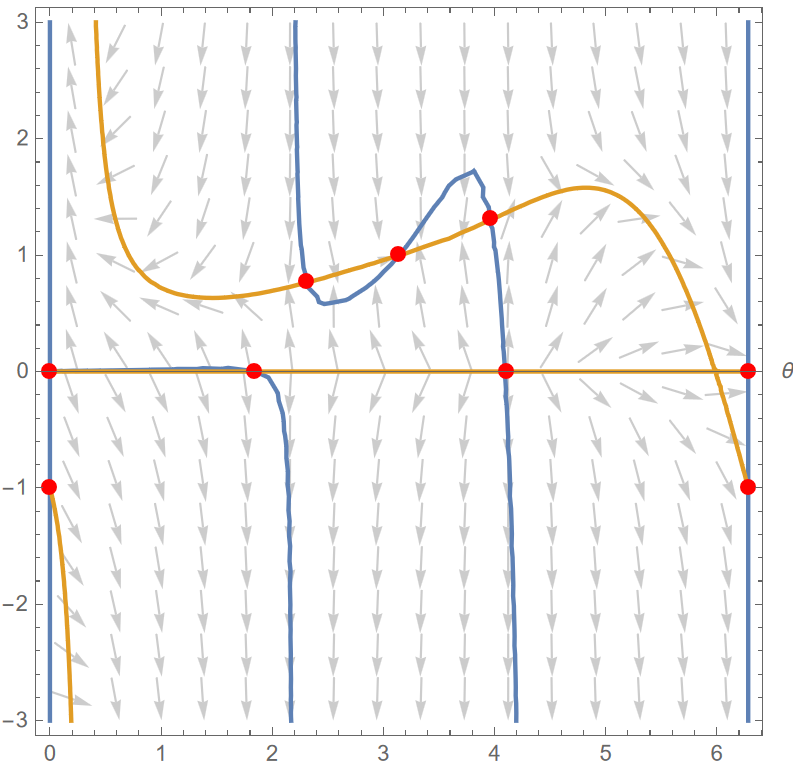}
        \caption{$\beta = 0.63 \in (\beta_1,\beta^*)$}
        \label{fig:phase_plane_0.63}
    \end{subfigure}
    \hfill
    \begin{subfigure}{0.3\textwidth}
        \centering
        \includegraphics[width=\textwidth]{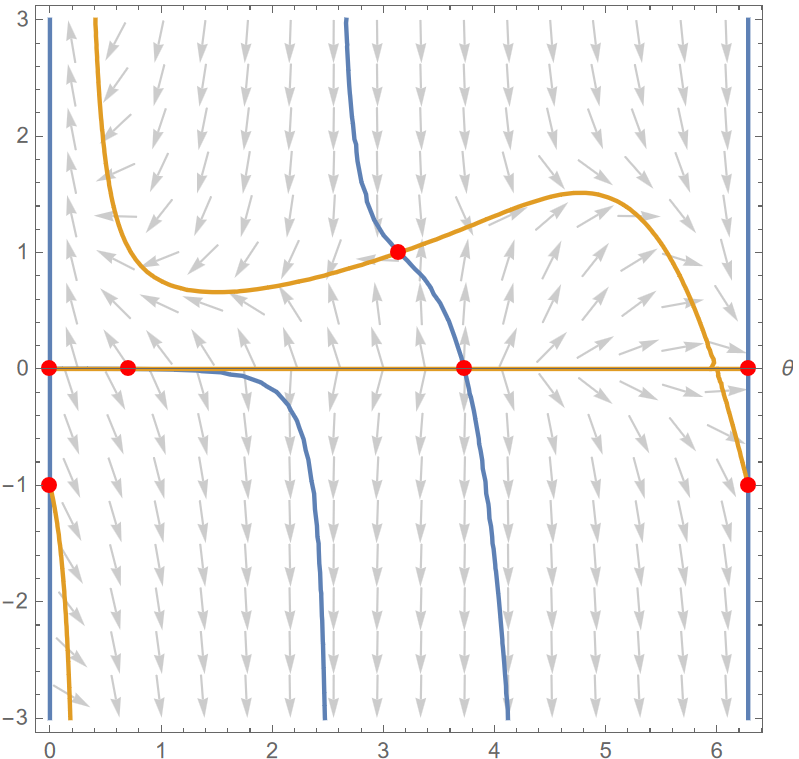}
        \caption{$\beta = 0.8 \in (\beta^*,\beta_2)$}
        \label{fig:phase_plane_0.8}
    \end{subfigure}
    \hfill
    \begin{subfigure}{0.3\textwidth}
        \centering
        \includegraphics[width=\textwidth]{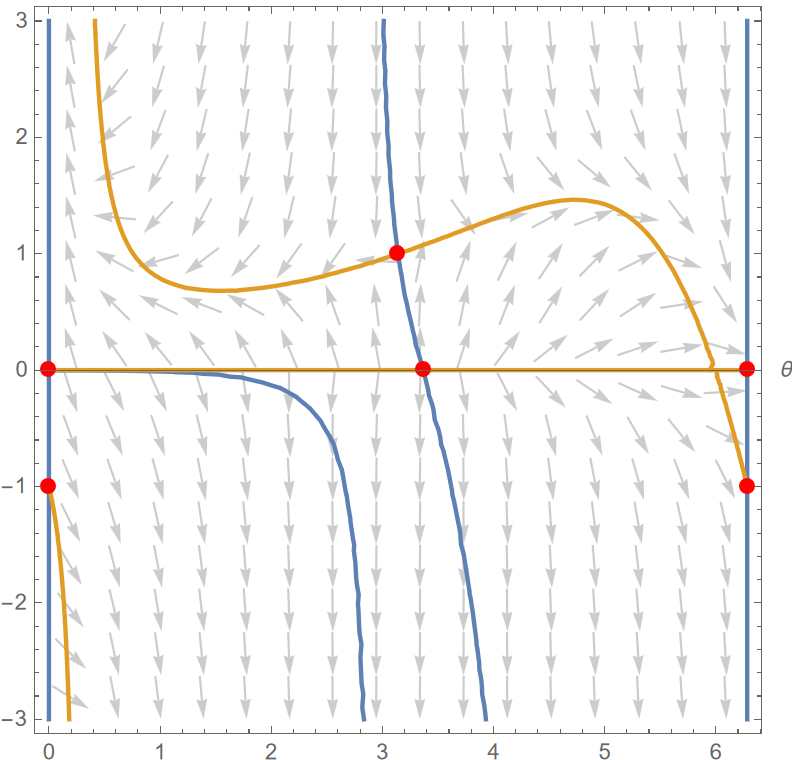}
        \caption{$\beta = 0.93 \in (\beta_2,1)$}
        \label{fig:phase_plane_0.93}
    \end{subfigure}
    \hfill
    \begin{subfigure}{0.3\textwidth}
        \centering
        \includegraphics[width=\textwidth]{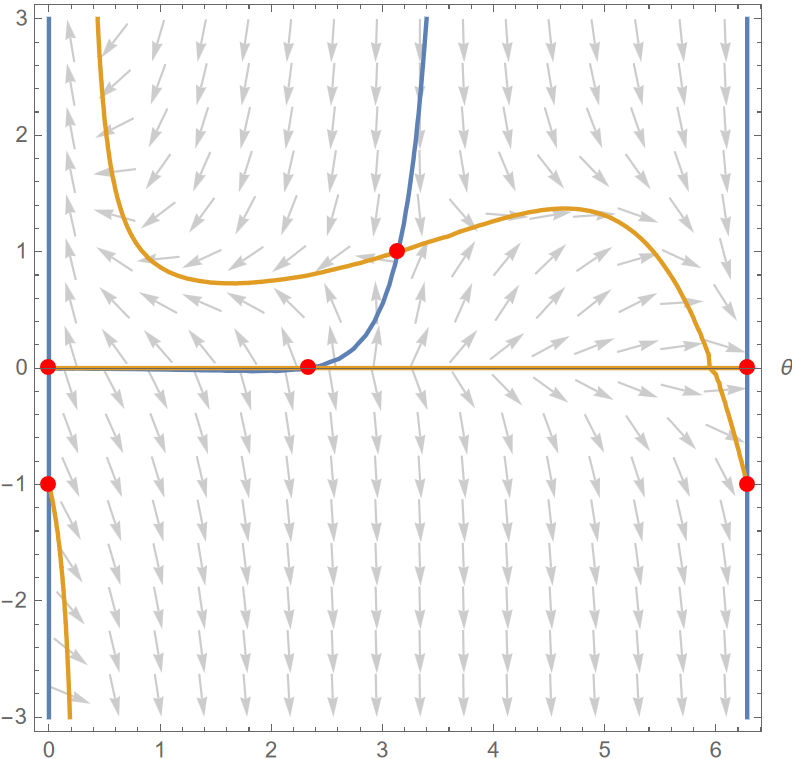}
        \caption{$\beta = 1.2 \in (1,\beta_3)$}
        \label{fig:phase_plane_1.2}
    \end{subfigure}
    \hfill
    \begin{subfigure}{0.3\textwidth}
        \centering
        \includegraphics[width=\textwidth]{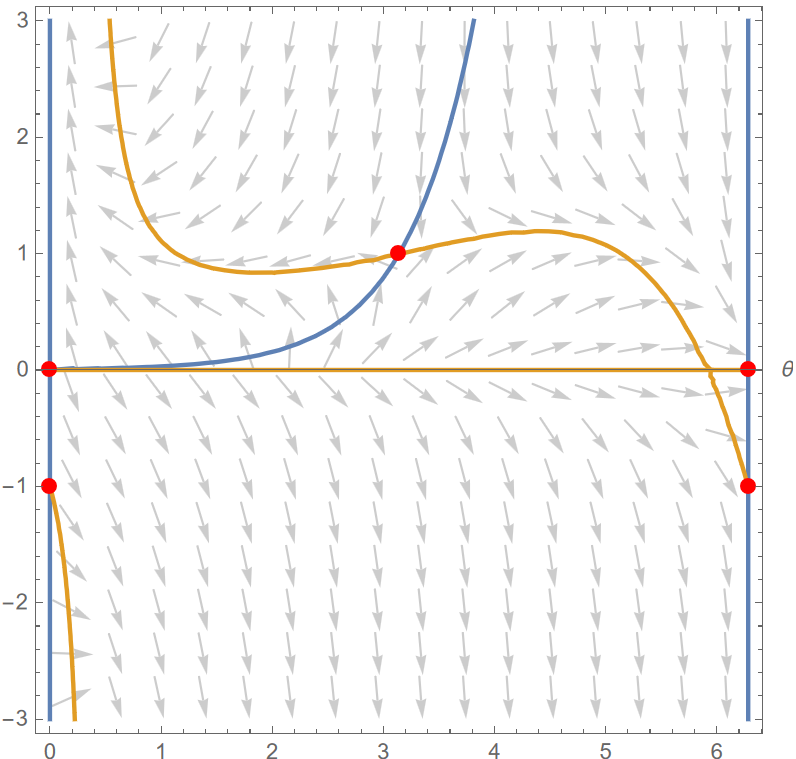}
        \caption{$\beta = 1.8 \in (\beta_3,\infty)$}
        \label{fig:phase_plane_1.8}
    \end{subfigure}
    \caption{Vector fields, nullclines, and equilibrium points of (\ref{reparam}) for several values of $\beta$}
    \label{fig:phase_planes}
\end{figure}

\textbf{Case 1: $\beta \in (0,\beta_0)$.} The graph of $R_2$ comprises four parts, separated by three vertical asymptotes. Since $K'(+0) < 0$, the first three parts cover all nonnegative values of $R$. Thus, each of them intersects with the positive part of $R_1$ at least once (note that this argument relies on the fact that $\theta_3 < \alpha$, which is verified in Section \ref{section:prop}), and exactly once, at $(\Bar{R},\Bar{\theta})$, $(1,\pi)$, and $(1/\Bar{R},2\pi-\Bar{\theta})$, respectively. Also, each of them meets $R = 0$ exactly once and forms an equilibrium point $(0,\theta_i)$ ($i = 1, 2, 3$). The fourth part does not form any equilibrium point. Figure \ref{fig:phase_plane_0.3} shows a typical phase plane of this case. (The figure does not show the fourth part of $R_2$.)

\textbf{Case 2: $\beta \in (\beta_0,\beta_1)$.} The graph of $R_2$ comprises three parts, separated by two vertical asymptotes. Since $K'(+0) > 0$ and $2\pi-\theta_3 < \theta_2$, the first part moves $R_2$ from some positive number to $+\infty$, not crossing $R = 0$, while the other two cover all real values of $R$, clearly crossing $R = 0$. But each still intersects with the positive part of $R_1$ exactly once. Figure \ref{fig:phase_plane_0.5} shows a typical phase plane of this case.

\textbf{Case 3: $\beta \in (\beta_1,\beta^*)$.} In this case, the graph of $R_2$ also comprises three parts. However, since $K'(+0) > 0$ and $\theta_2 < 2\pi-\theta_3$, the first part moves $R_2$ from some positive number to $-\infty$, crossing $R = 0$. Consequently, the second part moves $R_2$ from $+\infty$ to $-\infty$, intersecting with the positive part of $R_1$ at $(1,\pi)$ and crossing $R = 0$. Note that $F'(\pi) > 0$ in (\ref{F'(pi)}) implies \[R_1'(\pi) = \frac{2K''(\pi)}{K'(\pi)-K'(0)} < \frac{2K'(\pi)}{K(\pi)-K(0)} = R_2'(\pi),\] since $K'(\pi) - K'(0) > 0$ and $K(\pi) - K(0) < 0$. Thus, the second part must meet $R_1$ twice more, one in $\theta < \pi$ and the other in $\theta > \pi$. So, the three equilibrium points $(1,\pi)$, $(\Bar{R},\Bar{\theta})$, and $(1/\Bar{R},2\pi-\Bar{\theta})$ all belong to the second part. The third part does not form any equilibrium point. Figure \ref{fig:phase_plane_0.63} shows a typical phase plane of this case. (The figure does not show the third part of $R_2$.)

\textbf{Case 4: $\beta \in (\beta^*,\beta_2)$.} The only difference of this case from the one right before is that the two additional intersection points of $R_1$ and the second part of $R_2$ do not exist. Figure \ref{fig:phase_plane_0.8} shows a typical phase plane of this case. (Again, the figure does not show the third part of $R_2$.)

\textbf{Case 5: $\beta \in (\beta_2,1)$.} The graph of $R_2$ comprises two parts, separated by a vertical asymptote. Since $K'(+0) < 0$ and $2\pi-\theta_3 < \theta_3$, the first part only covers negative values of $R$. On the other hand, the second part meets the positive part of $R_1$ at $(1,\pi)$ and crosses $R = 0$ at $(0,\theta_3)$. Figure \ref{fig:phase_plane_0.93} shows a typical phase plane of this case.

\textbf{Case 6: $\beta \in (1,\beta_3)$.} This case is essentially identical to the case right before. The only difference is that $\theta_3 < 2\pi-\theta_3$ so the equilibrium points $(0,\theta_3)$ and $(1,\pi)$ both belong to the first part. But this does not make any fundamental change, as we will see in Section \ref{section:longtime}. Figure \ref{fig:phase_plane_1.2} shows a typical phase plane of this case. (The figure does not show the second part of $R_2$.)

\textbf{Case 7: $\beta \in (\beta_3,\infty)$.} The graph of $R_2$ is connected and meets the positive part of $R_1$ at $(1,\pi)$. It does not cross $R = 0$. Figure \ref{fig:phase_plane_1.8} shows a typical phase plane of this case.

\begin{remark}
    Because of (\ref{-beta}), the phase plane for $-\beta$ is obtained by simply flipping the one for $\beta$.
\end{remark}

\subsection{Behavior near the equilibria}
Now, we determine the local behavior near the equilibrium points specified in Theorem \ref{thm:equil}. Recall the Hartman-Grobman theorem: if the Jacobian matrix
\begin{equation} \label{Jacobian}
    Df(R_0,\theta_0) = \begin{bmatrix}
    4(K'(0)-K'(-\theta_0))R_0 + 2(K'(\theta_0)-K'(0)) & 2K''(-\theta_0)R_0^2 + 2K''(\theta_0)R_0\\
    2(K(0)-K(-\theta_0)) & 2K'(-\theta_0)R_0 + 2K'(\theta_0)
    \end{bmatrix}
\end{equation}
of (\ref{reparam}) at an equilibrium point $(R_0,\theta_0)$ has two eigenvalues with nonzero real parts, then $(R_0,\theta_0)$ is called hyperbolic and the orbit structure of the system in a neighborhood of the equilibrium is topologically equivalent to that of the linearized one. This means that the local behavior is determined by the signs of the real parts of the two eigenvalues. The two eigenvalues are the solutions to the characteristic equation
\begin{equation} \label{char_eq}
    \lambda^2 - a\lambda + b = 0
\end{equation}
of (\ref{Jacobian}), where $a,b$ are real. If $b<0$, then (\ref{char_eq}) has two real solutions, one positive and one negative, so $(R_0,\theta_0)$ is a saddle point. Otherwise, if $b>0$, then (\ref{char_eq}) has two real solutions or two complex solutions that are complex conjugates. In any case, $a$ is the sum of the real parts of the two solutions. Thus, if $a<0$ and $b>0$, then $(R_0,\theta_0)$ is an attractor; if $a,b>0$, then it is a repeller.

\textbf{Case 1: $(R_0,\theta_0) = (1,\pi)$.} The Jacobian matrix (\ref{Jacobian}) at this point equals
\begin{equation} \label{Df(1,pi)}
    Df(1,\pi) = \begin{bmatrix}
        2(K'(0)-K'(\pi)) & 4K''(\pi)\\
        2(K(0)-K(\pi)) & 4K'(\pi)
    \end{bmatrix}.
\end{equation}
Consider the characteristic equation (\ref{char_eq}) of (\ref{Df(1,pi)}). Then \[a = 2(K'(0)+K'(\pi))\] and \[b = 8\bigl[K'(\pi)(K'(0)-K'(\pi))-K''(\pi)(K(0)-K(\pi))\bigr].\]
From (\ref{F'(pi)}), we see that \[b = 4F'(\pi).\]
According to Lemma \ref{lemma:K'(0)<0}, $a$ remains negative for all $\beta > 0$. On the other hand, the sign of $b$ depends on $\beta$, by Assumption \ref{assumption:F(x)}. If $0 < \beta < \beta^*$, then $b$ is positive and $(1,\pi)$ is an attractor. On the other hand, if $\beta > \beta^*$, then $b$ is negative and $(1,\pi)$ is a saddle point.

\textbf{Case 2: $(R_0, \theta_0) = (\Bar{R}, \Bar{\theta}), (1/\Bar{R}, 2\pi-\Bar{\theta})$.} These two points are symmetric, so we only consider $(\Bar{R}, \Bar{\theta})$. Here, it is more convenient to consider the logarithmically rescaled system (\ref{reparam_log_1}) rather than (\ref{reparam}). The scaling does not change the essential local behavior. The Jacobian matrix of (\ref{reparam_log_1}) at $(\Tilde{A},\Tilde{\theta}) = (\log\Bar{R},\Bar{\theta})$ is
\begin{equation} \label{Jacobian_log}
    Dg_1(\log\Bar{R},\Bar{\theta}) = \begin{bmatrix}
    2(K'(0)-K'(-\Bar{\theta}))\Bar{R} & 2K''(-\Bar{\theta})\Bar{R} + 2K''(\Bar{\theta})\\
    2(K(0)-K(-\Bar{\theta}))\Bar{R} & 2K'(-\Bar{\theta})\Bar{R} + 2K'(\Bar{\theta})
    \end{bmatrix}.
\end{equation}
If we express the characteristic equation of (\ref{Jacobian_log}) as (\ref{char_eq}), then the definition of $\Bar{R}$ and Assumption \ref{assumption:F(x)} yield
\begin{align*}
    b/4\Bar{R} ={}& (K'(0)-K'(-\Bar{\theta}))K'(-\Bar{\theta})\Bar{R} + (K'(0)-K'(-\Bar{\theta}))K'(\Bar{\theta})\\
    & - (K(0)-K(-\Bar{\theta}))K''(-\Bar{\theta})\Bar{R} - (K(0)-K(-\Bar{\theta}))K''(\Bar{\theta})\\
    ={}& (K'(0)-K'(\Bar{\theta}))K'(-\Bar{\theta}) + (K'(0)-K'(-\Bar{\theta}))K'(\Bar{\theta})\\
    & - (K(0)-K(\Bar{\theta}))K''(-\Bar{\theta}) - (K(0)-K(-\Bar{\theta}))K''(\Bar{\theta})\\
    ={}& F'(\Bar{\theta}) < 0.
\end{align*}
Thus, $(\Bar{R},\Bar{\theta})$ is a saddle point. Due to symmetry, $(1/\Bar{R}, 2\pi-\Bar{\theta})$ is also a saddle point.

\textbf{Case 3: $(R_0, \theta_0) = (0, \theta)$, where $K(\theta) = K(0)$.} In other words, $\theta$ is one of $0$, $\theta_1$, $\theta_2$, $\theta_3$, and $2\pi$, depending on $\beta$. In this case, the Jacobian matrix (\ref{Jacobian}) is
\begin{equation} \label{Df(0,x)}
    Df(0,\theta) = \begin{bmatrix}
        2(K'(\theta)-K'(0)) & 0\\
        2(K(0)-K(-\theta)) & 2K'(\theta)
    \end{bmatrix}.
\end{equation}
If we express the characteristic equation of (\ref{Df(0,x)}) as (\ref{char_eq}), then \[a = 4K'(\theta)-2K'(0)\] and \[b = 4K'(\theta)(K'(\theta)-K'(0)).\] If $\theta = 2\pi$, then $a<0$ and $b>0$, since $K'(-0) < K'(0) < 0$ by Lemma \ref{lemma:K'(+0)>K'(-0)}. So, $(0,2\pi)$ is an attractor. Otherwise, we have $K'(\theta) > K'(0)$, according to Assumption \ref{assumption:K'(x)>K'(0)} and the fact that $\theta_3 < \alpha$. Thus, if $K'(\theta)<0$, then $b<0$ so $(0,\theta)$ is a saddle point; if $K'(\theta)>0$, then $a,b>0$ so $(0,\theta)$ is a repeller. Finally, Lemma \ref{lemma:K'(x):sign} determines the local behavior. (Note that if $(0,\theta)$ is a saddle point, its stable manifold always lies in $R = 0$.)

\textbf{Case 4: $(R_0, \theta_0) = (-1, 0), (-1, 2\pi)$.} Since these two points are antisymmetric, we only consider $(-1,2\pi)$. The Jacobian matrix (\ref{Jacobian}) at this point equals
\begin{equation} \label{Df(1,2pi)}
    Df(-1,2\pi) = \begin{bmatrix}
        K'(+0)-K'(-0) & 2(K''(+0)-K''(-0))\\
        0 & 2(K'(-0)-K'(+0))
    \end{bmatrix}.
\end{equation}
If we express the characteristic equation of (\ref{Df(1,2pi)}) as (\ref{char_eq}), then \[b = -2(K'(-0)-K'(+0))^2 < 0.\] Thus, $(-1,2\pi)$ is a saddle point. The orbits near $(-1,0)$ has the opposite directions to the symmetric ones near $(-1,2\pi)$, so $(-1,0)$ is also a saddle point.

\section{Long-time behavior of solutions} \label{section:longtime}
In this section, we fully classify the long-time behavior of the spirals with two branches and analyze their asymptotics.

\subsection{Non-existence of cycles and convergence to the equilibria}
Recall the celebrated Poincaré-Bendixson theorem: in a 2D ODE system with finitely many equilibrium points, a bounded solution must converge to an equilibrium point, a limit cycle, or a cycle composed of homoclinic and heteroclinic orbits. Now, we show no such cycle exists, so bounded solutions always converge to an equilibrium.

\begin{theorem}[Non-existence of cycles] \label{thm:no_cycles}
    Suppose $|\beta| \notin \{\beta_0, \beta^*, \beta_2, \beta_3\}$ so all the equilibrium points of (\ref{reparam}) are hyperbolic. In the phase plane of (\ref{reparam}), all kinds of cycles, namely closed trajectories, homoclinic orbits, and heteroclinic cycles, do not exist. Therefore, any bounded solution to (\ref{reparam}) converges to some equilibrium point.
\end{theorem}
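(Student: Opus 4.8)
The plan is to show that \emph{every} kind of cycle --- a closed orbit, a homoclinic loop, or a heteroclinic graphic --- would bound a region on which a Bendixson--Dulac area identity fails, after first confining all cycles to one of two half-strips using the invariant lines. The first step is this reduction. Since $\Tilde R'=0$ on $\{\Tilde R=0\}$ and, by $2\pi$-periodicity of $K$, $\Tilde\theta'=0$ on $\{\Tilde\theta=0\}$ and $\{\Tilde\theta=2\pi\}$, these three lines are invariant, so no trajectory crosses them and any cycle lies entirely in the open half-strip $\{\Tilde R>0,\ 0<\Tilde\theta<2\pi\}$ or $\{\Tilde R<0,\ 0<\Tilde\theta<2\pi\}$. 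On each half-strip I pass to the logarithmic system (\ref{reparam_log_1}) (resp.\ (\ref{reparam_log_2})), whose domain is simply connected.

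The half-strip $\Tilde R<0$ is handled by the equilibrium count. As already shown via the mean value theorem and Assumption \ref{assumption:K'(x)>K'(0)}, the nullclines $R_1$ and $R_2$ do not meet for $\Tilde R<0$, so (\ref{reparam_log_2}) has no equilibrium in the open lower half-strip. A closed orbit must enclose an equilibrium (index $+1$), which is impossible; the only equilibria with $\Tilde R<0$ are the saddles $(-1,0)$ and $(-1,2\pi)$, which sit on the invariant lines $\Tilde\theta=0,2\pi$ and therefore cannot be enclosed by a Jordan curve lying in the open strip, while their stable and unstable manifolds run along the invariant lines or $\{\Tilde R=0\}$ and so cannot close up into a graphic inside the strip. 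Hence no cycle survives in $\Tilde R<0$.

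The half-strip $\Tilde R>0$ is the crux. A direct computation gives $\nabla\cdot g_1 = 2K'(0)\exp(\Tilde A)+2K'(\Tilde\theta)$. By the Poincaré index theorem together with Theorem \ref{thm:equil}, the only index-consistent closed curve encloses $(1,\pi)$ alone (enclosing one saddle gives index $-1$, enclosing $(1,\pi)$ with one or both interior saddles gives $0$ or $-1$). For any such cycle $\Gamma=\partial D$ I apply a Dulac argument: if $\rho>0$ makes $\nabla\cdot(\rho\,g_1)$ of one sign on $D$, then since $g_1$ is tangent to $\Gamma$ along its orbit arcs and vanishes at any saddle on $\Gamma$, the identity $\iint_D\nabla\cdot(\rho\,g_1)\,dA=\oint_\Gamma\rho\,g_1\cdot\hat n\,ds=0$ is contradicted, ruling out closed orbits, homoclinic loops, and heteroclinic cycles simultaneously. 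The device that produces the sign is Assumption \ref{assumption:slope}: writing the bracketed expressions there as logarithmic derivatives, one has $R_1'/R_1=\frac{K''(\theta)}{K'(\theta)-K'(0)}+\frac{K''(-\theta)}{K'(-\theta)-K'(0)}$ and $R_2'/R_2=\frac{K'(\theta)}{K(\theta)-K(0)}+\frac{K'(-\theta)}{K(-\theta)-K(0)}$, so the assumption reads $|R_1'/R_1|<R_2'/R_2$ on the $\theta$-window between the two interior saddles; that is, in the $(\Tilde A,\Tilde\theta)$ plane the nullcline $\Tilde A=\log R_2(\theta)$ is strictly steeper than $\Tilde A=\log R_1(\theta)$. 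This controls the rotation of $g_1$ on exactly the window where an enclosing cycle must live and yields the required sign (equivalently, the Dulac weight). For $\beta>\beta^*$, where only the repeller $(1,\pi)$ survives in $\Tilde R>0$, the nullcline geometry of Cases 4--7 leaves no trapping annulus, and the same area identity excludes a limit cycle.

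The hard part is this last half-strip. Because $K'(\Tilde\theta)$ changes sign and, at the vertical asymptotes of $R_2$ (where $K(-\theta)=K(0)$), no weight depending on $\theta$ alone can render $\nabla\cdot(\rho\,g_1)$ sign-definite, the Dulac weight is forced to depend on both $\Tilde A$ and $\Tilde\theta$; constructing it --- or, equivalently, turning the slope inequality of Assumption \ref{assumption:slope} into a quantity monotone along the flow that forbids a return --- is the technical core, and one must also check integrability of $\rho$ near the saddles and near the $R_2$-asymptotes where $\Tilde A\to\pm\infty$. The most delicate subcase is the index-consistent heteroclinic cycle joining the two symmetric saddles $(\Bar R,\Bar\theta)$ and $(1/\Bar R,2\pi-\Bar\theta)$ and surrounding $(1,\pi)$; this is precisely where the slope comparison of Assumption \ref{assumption:slope} is indispensable.
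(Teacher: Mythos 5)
Your reduction to the two half-strips, the treatment of $\{\Tilde R<0\}$, and the index-theoretic conclusion that any cycle in $\{\Tilde R>0\}$ must enclose $(1,\pi)$ alone all match the paper. But the decisive step --- actually excluding such a cycle when $0<\beta<\beta^*$ --- is not carried out. You compute $\nabla\cdot g_1 = 2K'(0)e^{\Tilde A}+2K'(\Tilde\theta)$, correctly observe it is not sign-definite, and then assert that Assumption \ref{assumption:slope} ``yields the required sign (equivalently, the Dulac weight)'' while conceding in the next paragraph that constructing this weight ``is the technical core.'' That is the whole theorem: without an explicit $\rho$ and a verification that $\nabla\cdot(\rho g_1)$ has one sign on the region a cycle would bound (plus the integrability checks you yourself flag near the saddles and the $R_2$-asymptotes), nothing has been proved. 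The slope inequality of Assumption \ref{assumption:slope} is a pointwise comparison of the two nullclines' logarithmic derivatives; it does not translate into a Dulac function by any routine mechanism, and you give none.

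The paper closes this case by an elementary geometric argument that needs no Dulac function. For $0<\beta\le\beta_1$ it first traps any cycle in the invariant strip $\theta_2\le\theta\le 2\pi-\theta_2$ (checking the sign of $\theta'$ on the two walls), passes to $(\Tilde A,\Tilde\theta)$ coordinates, and notes that the extreme points of the cycle in $\Tilde A$ (call them $N,S$) lie on the nullcline $\Tilde A=\log R_1(\theta)$ while the extreme points in $\theta$ (call them $E,W$) lie on $\Tilde A=\log R_2(\theta)$. The chord $\overline{NS}$ spans the full height of the cycle over at most its width, so $|\mathrm{slope}(\overline{NS})|\ge|\mathrm{slope}(\overline{EW})|$; the mean value theorem identifies these chord slopes with $(\log R_1)'(\theta)$ and $(\log R_2)'(\eta)$ at interior points, and Assumption \ref{assumption:slope} says exactly that $|(\log R_1)'|<(\log R_2)'$ on that window --- a contradiction. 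For $\beta\in(\beta_1,\beta^*)$ it instead exhibits forward-invariant regions bounded by arcs of the nullclines that any cycle enclosing $(1,\pi)$ would have to cross, using the second half of Assumption \ref{assumption:slope}. Homoclinic and heteroclinic graphics are reduced to the closed-orbit case by a local perturbation near each saddle preserving the index. If you want to salvage your route, you must either exhibit $\rho$ explicitly or replace the Dulac step by this kind of direct geometric obstruction; as written the argument has a gap precisely at the only nontrivial case.
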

\begin{proof}
    For convenience, let us use the notation $R,\theta$ instead of $\Tilde{R},\Tilde{\theta}$. First, note that $R = 0$ is an invariant set, so any closed or homoclinic orbit must be fully contained in either $R > 0$ or $R < 0$. This is also true for any heteroclinic cycle since all saddle points on $R = 0$ repel trajectories in $R \neq 0$. (In other words, their stable manifolds always lie in $R = 0$.) Thus, we can consider two regions $R > 0$ and $R < 0$ separately.

    We use the index theory illustrated in \cite[Corollary~1.8.5]{GH83}. Although the theory directly applies only to closed orbits, we can also use it to prove the non-existence of homoclinic and heteroclinic cycles with a following perturbation argument. Suppose we have a homoclinic orbit $\gamma$ joining a hyperbolic saddle point $p$ to itself. Then, we can take a small enough neighborhood of $p$ containing no other equilibrium points and perturb the inside vector field to obtain a new smooth closed orbit $\gamma'$, which has the same index number as $\gamma$. Likewise, we can make a closed orbit from a heteroclinic cycle by perturbation in each neighborhood of hyperbolic saddle points of the cycle.
    
    According to the theory, within any cycle, there must be an attractor or a repeller since all the equilibrium points are hyperbolic. Then, the lower region $R < 0$ does not contain cycles since it does not contain an equilibrium point in its interior. If $|\beta| > \beta^*$, inside the upper region $R > 0$ there is only one equilibrium point $(1,\pi)$, which is a saddle, so the upper region also does not have cycles.

    Now suppose $0 < \beta < \beta^*$ and there is a cycle in $R > 0$. Since $(1,\pi)$ is the only non-saddle equilibrium point in this region, the index theory yields that the cycle must enclose this point. For the case $0 < \beta \leq \beta_1$, the region $\theta_2 \leq \theta \leq 2\pi-\theta_2$ is an invariant set in $R > 0$ since $\theta' \geq 0$ on $\theta = \theta_2$ and $\theta' \leq 0$ on $\theta = 2\pi-\theta_2$. Thus, the whole cycle must be contained in this region. Now, convert the phase plane into that of the log-scaled system (\ref{reparam_log_1}), which is symmetrical with respect to $(1,\pi)$. Then the cycle is still contained in $\theta_2 \leq \theta \leq 2\pi-\theta_2$. The uppermost point $N$ and the lowermost point $S$ of the cycle are on the log-scaled graph of $R_1$, while the leftmost point $W$ and the rightmost point $E$ are on that of $R_2$. We note that the absolute value of the slope of $\overline{NS}$ is always greater than that of $\overline{EW}$. By the mean value theorem, the slope of $\overline{NS}$ equals \[(\log R_1)'(\theta) = \frac{K''(\theta)}{K'(\theta)-K'(0)} + \frac{K''(-\theta)}{K'(-\theta)-K'(0)}\] for some $\theta \in (\theta_2,2\pi-\theta_2)$, while that of $\overline{EW}$ equals \[(\log R_2)'(\eta) = \frac{K'(\eta)}{K(\eta)-K(0)} + \frac{K'(-\eta)}{K(-\eta)-K(0)}\] for some $\eta \in (\theta_2,2\pi-\theta_2)$. Then Assumption \ref{assumption:slope} yields $|(\log R_1)'(\theta)| < (\log R_2)'(\eta)$, a contradiction.

    Finally, we consider the case $\beta \in (\beta_1,\beta^*)$ and again only focus on the upper half-plane $R > 0$. In this case, the region $\{0 \leq \theta \leq 2\pi-\theta_3\} \cup \{2\pi-\theta_3 < \theta \leq \Bar{\theta}, R \leq R_2(\theta)\}$ and its symmetric region are invariant sets, since $R' < 0$ on the graph of $R_2$ on $2\pi-\theta_3 < \theta < \Bar{\theta}$ and $\theta' < 0$ on $\{(R,\Bar{\theta}) : R<\Bar{R}\}$. Thus, the cycle must be contained in the region in between. Also, by Assumption \ref{assumption:slope}, the slope of $R_1$ in $\Bar{\theta} < \theta < 2\pi-\Bar{\theta}$ is always positive, which implies that the region enclosed by the graphs of $R_1$ and $R_2$ is an invariant set. However, the cycle must pass through this region, again a contradiction.
\end{proof}

Not all solutions to (\ref{reparam}) are bounded. However, unbounded solutions are symmetric to bounded ones if we consider the symmetry of $(\Tilde{R},\Tilde{\theta})$ and $(1/\Tilde{R},2\pi-\Tilde{\theta})$ (with time reversal when $\Tilde{R} < 0$) discussed previously because all unbounded solutions must have $|\Tilde{R}| \to \infty$. Indeed, for any large $C>0$, each unbounded orbit must enter the region where $|\Tilde{R}|$ is greater than $C$ and increasing, the boundary of which consists of $|\Tilde{R}| = C$, $\Tilde{\theta} = 0,2\pi$, and the graph of $R_1$ near $\Tilde{\theta} = 2\pi-\alpha$. The slope of the graph is negative for large enough $C$, since \[R_1'(\theta) = \frac{K''(\theta)(K'(-\theta)-K'(0))+K''(-\theta)(K'(\theta)-K'(0))}{(K'(-\theta)-K'(0))^2}\] is negative if $\theta$ is close to $2\pi-\alpha$ and, accordingly, $K'(-\theta)-K'(0)$ is small enough. (Note that Assumption \ref{assumption:K'(x)>K'(0)} yields $K''(\alpha)(K'(-\alpha)-K'(0)) < 0$.) This makes the region invariant, implying $|\Tilde{R}| \to \infty$ for unbounded solutions. In other words, we get $\Tilde{R} \to 0$ for the symmetric ones, hence the following corollary.

\begin{corollary} \label{cor:convergence}
    For each solution to (\ref{reparam}), either itself or the symmetric one (in an appropriate sense) converges to an equilibrium point.
\end{corollary}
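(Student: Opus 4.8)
The plan is to reduce the statement to Theorem \ref{thm:no_cycles} by means of the dichotomy ``bounded versus $|\Tilde{R}|\to\infty$'' together with the symmetry $(\Tilde{R},\Tilde{\theta})\mapsto(1/\Tilde{R},2\pi-\Tilde{\theta})$ from Section \ref{section:reparam}. First I would record that $\Tilde{\theta}$ is permanently confined to $[0,2\pi]$: since $K(0)=K(2\pi)$, the second line of (\ref{reparam}) gives $\Tilde{\theta}'=0$ on both $\Tilde{\theta}=0$ and $\Tilde{\theta}=2\pi$, so these lines are invariant and no trajectory can leave the strip. Consequently a solution can be unbounded only through $\Tilde{R}$, and the discussion just above the corollary upgrades this to $|\Tilde{R}|\to\infty$: for $C$ large the set on which $|\Tilde{R}|>C$ and $|\Tilde{R}|$ is increasing is positively invariant, because the relevant branch of the $R_1$-nullcline near $\Tilde{\theta}=2\pi-\alpha$ has negative slope. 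This yields the exhaustive dichotomy that either $\Tilde{R}$ stays bounded or $|\Tilde{R}|\to\infty$.

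Granting the dichotomy, the corollary is short. If $\Tilde{R}$ stays bounded, the trajectory lies in a bounded region of the strip, and Theorem \ref{thm:no_cycles} (Poincaré--Bendixson together with the absence of cycles) forces convergence to an equilibrium. If instead $|\Tilde{R}|\to\infty$, I would pass to the symmetric trajectory $(1/\Tilde{R},2\pi-\Tilde{\theta})$, which is again a solution of (\ref{reparam}) — traversed in the same time direction when $\Tilde{R}>0$ and in the reversed one when $\Tilde{R}<0$. Since $|\Tilde{R}|\to\infty$ along the original orbit, the coordinate $1/\Tilde{R}\to0$ along its symmetric partner, so the latter stays bounded and, again by Theorem \ref{thm:no_cycles}, converges to an equilibrium. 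Thus in either case one of the two orbits converges, which is exactly the assertion.

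The delicate point — and the main obstacle — is the bookkeeping of the symmetry on $\Tilde{R}<0$, where it reverses time. There the symmetric curve is swept out backwards, so ``converges to an equilibrium'' must be read for the time-reversed (and reparametrized) solution, which is precisely what ``in an appropriate sense'' encodes. Concretely I would verify that the blow-up end $|\Tilde{R}|\to\infty$ of the original orbit is carried to the $\Tilde{R}\to0$ end of the symmetric orbit in its own forward time, so that the relevant $\omega$-limit genuinely lies near $\Tilde{R}=0$ where the equilibria sit, using that the symmetry sends equilibria to equilibria. A secondary check is that the dichotomy is truly exhaustive, i.e.\ that no unbounded orbit merely has $\limsup|\Tilde{R}|=\infty$ while dipping below $C$ infinitely often; this is excluded by the positive invariance of $\{|\Tilde{R}|>C\}$ noted above, since once $|\Tilde{R}|$ first exceeds $C$ with $\Tilde{R}'$ of the matching sign the orbit can never return.
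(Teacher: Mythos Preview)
Your proposal is correct and follows essentially the same route as the paper: establish the dichotomy bounded versus $|\Tilde{R}|\to\infty$ via the positively invariant region described just before the corollary, then invoke Theorem~\ref{thm:no_cycles} directly in the bounded case and, in the unbounded case, pass to the symmetric orbit $(1/\Tilde{R},2\pi-\Tilde{\theta})$ (with time reversal when $\Tilde{R}<0$), which lands near $\Tilde{R}=0$ and is hence bounded. Your added remarks on the invariance of $\Tilde{\theta}\in[0,2\pi]$ and on the exhaustiveness of the dichotomy are useful fleshing-out of points the paper leaves implicit, but the argument is the same.
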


\subsection{Solutions to the reparametrized system}
As a result of Corollary \ref{cor:convergence}, we can partition the entire phase plane so that all the orbits in each region converge to a single equilibrium point (if we consider $(\pm\infty,\theta)$ as a single point by compactification). This can be done by drawing a topological graph indicating the heteroclinic orbits between the equilibria. Applying Corollary \ref{cor:convergence} to both directions of time, we see that every nontrivial orbit becomes a heteroclinic orbit by taking $s \to \pm\infty$.

\begin{figure}[t]
    \centering
    \begin{subfigure}{0.3\textwidth}
        \centering
        \includegraphics[width=\textwidth]{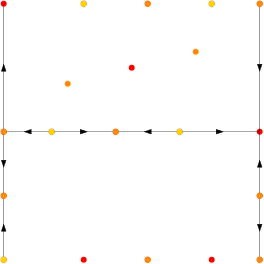}
        \caption{}
        \label{fig:top_graphs_11}
    \end{subfigure}
    \hfill
    \begin{subfigure}{0.3\textwidth}
        \centering
        \includegraphics[width=\textwidth]{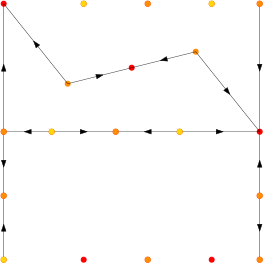}
        \caption{}
        \label{fig:top_graphs_12}
    \end{subfigure}
    \hfill
    \begin{subfigure}{0.3\textwidth}
        \centering
        \includegraphics[width=\textwidth]{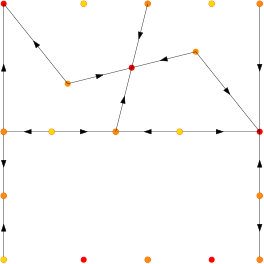}
        \caption{}
        \label{fig:top_graphs_13}
    \end{subfigure}
    \hfill
    \begin{subfigure}{0.3\textwidth}
        \centering
        \includegraphics[width=\textwidth]{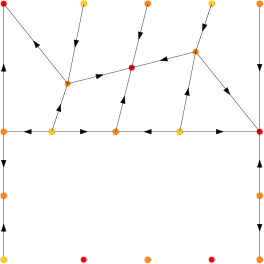}
        \caption{}
        \label{fig:top_graphs_14}
    \end{subfigure}
    \hfill
    \begin{subfigure}{0.3\textwidth}
        \centering
        \includegraphics[width=\textwidth]{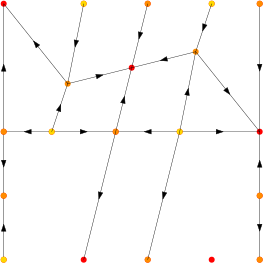}
        \caption{}
        \label{fig:top_graphs_15}
    \end{subfigure}
    \hfill
    \begin{subfigure}{0.3\textwidth}
        \centering
        \includegraphics[width=\textwidth]{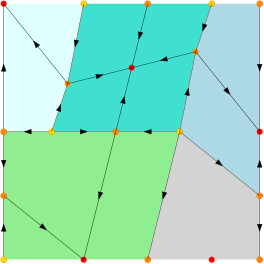}
        \caption{}
        \label{fig:top_graphs_16}
    \end{subfigure}
    \caption{Constructing the topological graph of equilibrium points in the case of $\beta \in (0,\beta_0)$}
    \label{fig:top_graphs_construction}
\end{figure}

Consider the case $\beta \in (0,\beta_0)$ and recall its typical phase plane, Figure \ref{fig:phase_plane_0.3}. We first mark all the equilibrium points and also their symmetric ones, namely $(\pm\infty,2\pi-\theta)$ where $K(\theta) = K(0)$. We can readily find the heteroclinic orbits on the invariant sets $\Tilde{R} = 0$, $\Tilde{\theta} = 0$, and $\Tilde{\theta} = 2\pi$, as Figure \ref{fig:top_graphs_11} indicates. In the figure, the attractors, the saddle points, and the repellers are marked red, orange, and yellow, respectively. The attractors have only inward edges, while the repellers have only outward edges. The saddle points have one-dimensional stable and unstable manifolds, so they have two (or one if they are on the boundary) inward edges and two (or one) outward edges. (This does not apply to the `$\infty$ points'.) By symmetry discussed before, $(+\infty,2\pi-\theta)$ has the same local behavior with $(0,\theta)$ in $\Tilde{R} > 0$, while that of $(-\infty,2\pi-\theta)$ is the opposite. We can see that there are no more edges towards the points on $\Tilde{R} = 0$ except $(0,2\pi)$, so no edge can head towards $+\infty$ points except $(+\infty,0)$. Meanwhile, every edge cannot cross the other edges.

Now, notice the saddle point $(\Bar{R},\Bar{\theta})$, which has two edges starting from it. They cannot enter the region $\Tilde{\theta} > 2\pi-\theta_2$ since $\Tilde{\theta}' < 0$ in between the second part of the graph of $R_2$ and $\Tilde{\theta} = 2\pi-\theta_2$. Thus, the only possibility is heading towards $(+\infty,0)$ and $(1,\pi)$, as indicated in Figure \ref{fig:top_graphs_12}. By symmetry, we can draw the two edges starting from $(1/\Bar{R},2\pi-\Bar{\theta})$.

Next, we focus on the unique outward edge of $(0,\theta_2)$ in $\Tilde{R} > 0$. While it still cannot enter $\Tilde{\theta} > 2\pi-\theta_2$, it also cannot enter $\Tilde{\theta} < 2\pi-\theta_3$ since $\Tilde{\theta}' > 0$ in between the first part of the graph of $R_2$ and $\Tilde{\theta} = 2\pi-\theta_3$. Thus, the only possibility is heading towards $(1,\pi)$, as indicated in Figure \ref{fig:top_graphs_13} with the symmetric edge. Then, the starting points of two inward edges of $(\Bar{R},\Bar{\theta})$ are automatically determined, as expressed in Figure \ref{fig:top_graphs_14}.

There also exists a unique outward edge of $(0,\theta_2)$ in $\Tilde{R} < 0$. Since it cannot enter $\Tilde{\theta} > \theta_2$, the only option is reaching the attractor $(-\infty,2\pi-\theta_3)$, as expressed in Figure \ref{fig:top_graphs_15}. By antisymmetry, there exists an edge from $(0,\theta_3)$ to $(-\infty,2\pi-\theta_2)$. Finally, the only possible destination of the unique outward edge of $(-1,0)$ is $(-\infty,2\pi-\theta_3)$, and the antisymmetric edge is from $(0,\theta_3)$ to $(-1,2\pi)$, completing the construction of the graph. Figure \ref{fig:top_graphs_16} indicates the partition of the whole phase plane, each region being attracted to the unique attractor (marked red) in that region. Each borderline between the two regions is attracted to the unique saddle point (marked orange) on it.

\begin{figure}[t]
    \centering
    \begin{subfigure}{0.3\textwidth}
        \centering
        \includegraphics[width=\textwidth]{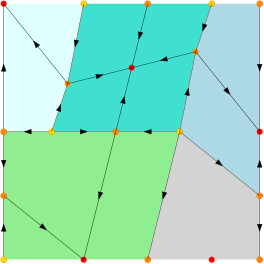}
        \caption{$\beta \in (0,\beta_0)$}
        \label{fig:top_graphs_1}
    \end{subfigure}
    \hfill
    \begin{subfigure}{0.3\textwidth}
        \centering
        \includegraphics[width=\textwidth]{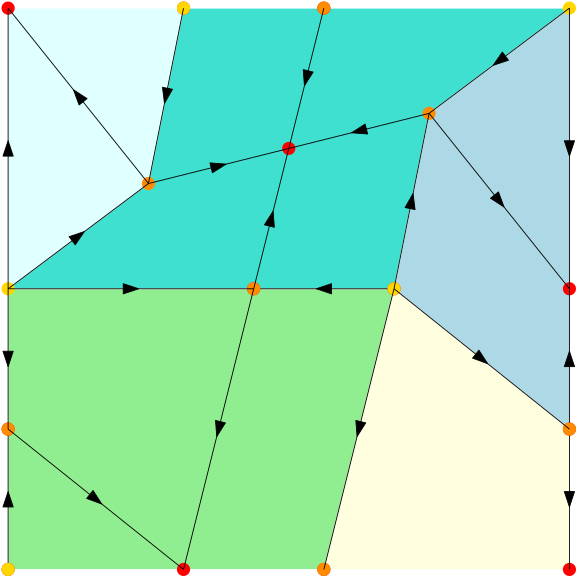}
        \caption{$\beta \in (\beta_0,\beta_1)$}
        \label{fig:top_graphs_2}
    \end{subfigure}
    \hfill
    \begin{subfigure}{0.3\textwidth}
        \centering
        \includegraphics[width=\textwidth]{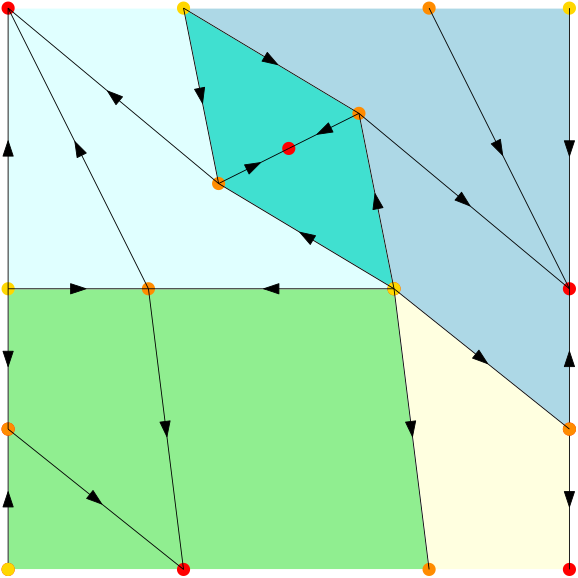}
        \caption{$\beta \in (\beta_1,\beta^*)$}
        \label{fig:top_graphs_3}
    \end{subfigure}
    \hfill
    \begin{subfigure}{0.3\textwidth}
        \centering
        \includegraphics[width=\textwidth]{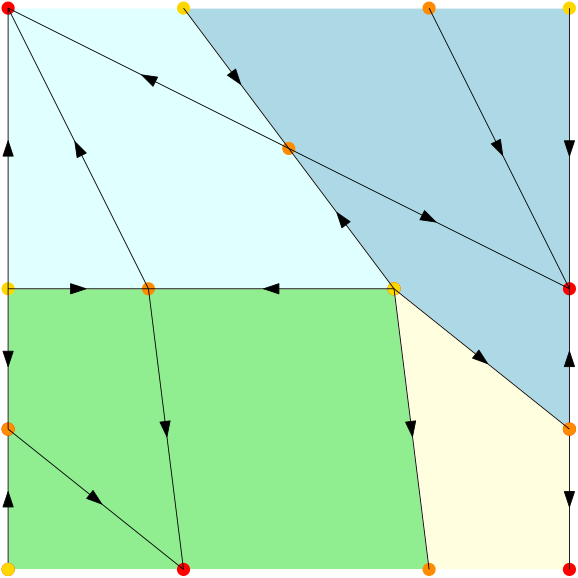}
        \caption{$\beta \in (\beta^*,\beta_2)$}
        \label{fig:top_graphs_4}
    \end{subfigure}
    \hfill
    \begin{subfigure}{0.3\textwidth}
        \centering
        \includegraphics[width=\textwidth]{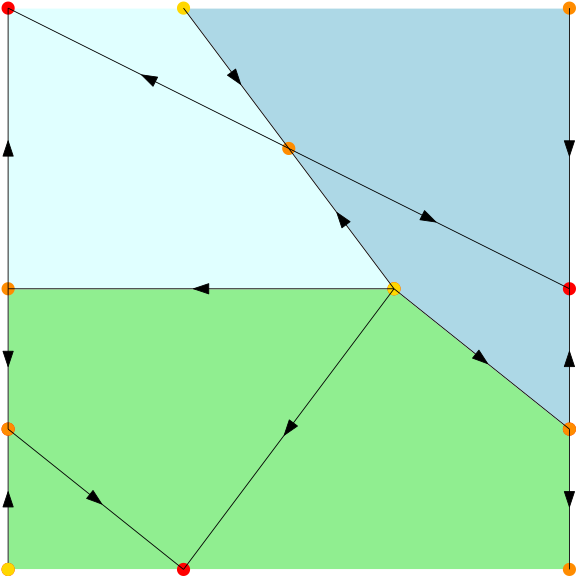}
        \caption{$\beta \in (\beta_2,1)$}
        \label{fig:top_graphs_5}
    \end{subfigure}
    \hfill
    \begin{subfigure}{0.3\textwidth}
        \centering
        \includegraphics[width=\textwidth]{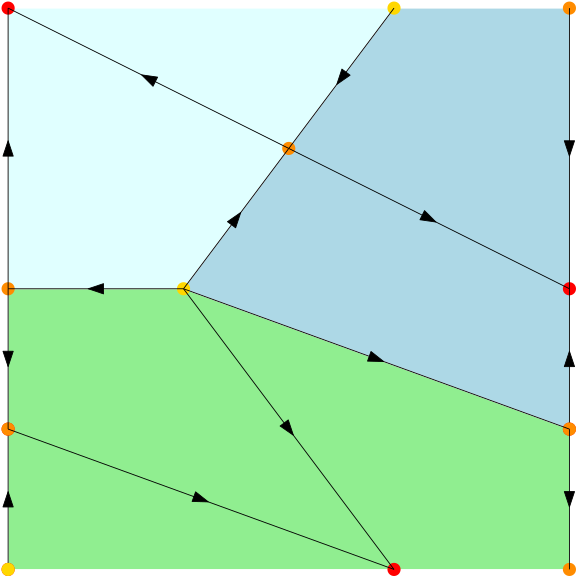}
        \caption{$\beta \in (1,\beta_3)$}
        \label{fig:top_graphs_6}
    \end{subfigure}
    \hfill
    \begin{subfigure}{0.3\textwidth}
        \centering
        \includegraphics[width=\textwidth]{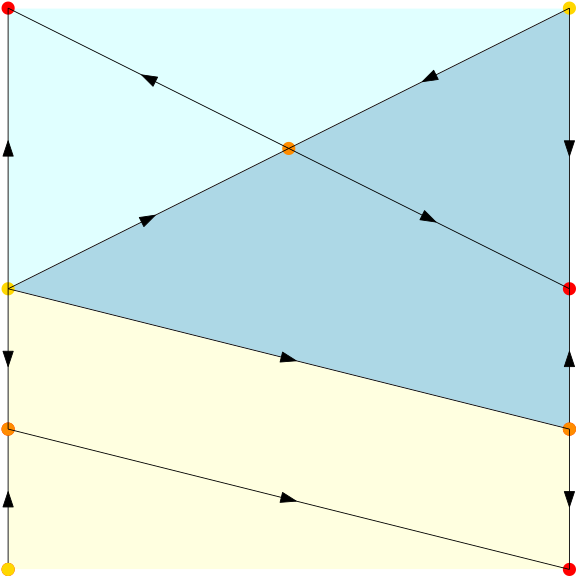}
        \caption{$\beta \in (\beta_3,\infty)$}
        \label{fig:top_graphs_7}
    \end{subfigure}
    \caption{Topological graphs of heteroclinic orbits and partitions of the phase planes}
    \label{fig:top_graphs}
\end{figure}

The other cases are only simpler, so we can similarly obtain the graphs for all cases as Figure \ref{fig:top_graphs}. Note that the graphs of the cases $\beta \in (\beta_2,1)$ and $\beta \in (1,\beta_3)$ are equivalent. The graphs for the negative direction of the time variable $s$ can be obtained by simply reversing the edges.

We also note that Figure \ref{fig:top_graphs} suggests the asymptotic stability of the symmetric and asymmetric self-similar spirals, determined in Theorem \ref{thm:equil} and the remark below it, is natural. In the absence of the asymmetric spiral, only $(0,2\pi)$, $(+\infty,0)$, and $(1,\pi)$ attract orbits in $\Tilde{R} > 0$. Since $(0,2\pi)$ is always an attractor, its basin of attraction is open (and so is that of $(+\infty,0)$). If $(1,\pi)$ were also an attractor, then its basin of attraction would also be open, contradicting the connectedness of $\Tilde{R} > 0$. Thus, in this case, $(1,\pi)$ must be unstable. On the other hand, when an asymmetric spiral exists, then the symmetric one need not be unstable. Meanwhile, given the uniqueness of the asymmetric spiral, a similar topological argument yields its instability.

We can summarize the results as the following theorem.
\begin{theorem}[Long-time behavior of reparametrized system] \label{thm:longtime_reparam}
    Suppose $\beta > 0$, $\beta \notin \{\beta_0, \beta^*, \beta_2, \beta_3\}$, $\Tilde{R} \neq 0$, and $0 < \Tilde{\theta} < 2\pi$. For the positive direction of $s$, any solution $(\Tilde{R},\Tilde{\theta})$ to (\ref{reparam}) approaches one of the following:
    \begin{itemize}
        \item $(1,\pi)$, as $s\to+\infty$;
        \item $(\Bar{R},\Bar{\theta})$, as $s\to+\infty$;
        \item $(1/\Bar{R},2\pi-\Bar{\theta})$, as $s\to+\infty$;
        \item $(0,2\pi)$, as $s\to+\infty$;
        \item $(+\infty,0)$, as $s\uparrow s^*$ for some $s^*<+\infty$;
        \item $(-1,2\pi)$, as $s\to+\infty$;
        \item $(-\infty,\theta_0)$, where $K(-\theta_0) = 0$, as $s\uparrow s^*$ for some $s^*<+\infty$.
    \end{itemize}
    Meanwhile, for the negative direction of $s$, the solution approaches one of the following:
    \begin{itemize}
        \item $(1,\pi)$, as $s\to-\infty$;
        \item $(\Bar{R},\Bar{\theta})$, as $s\to-\infty$;
        \item $(1/\Bar{R},2\pi-\Bar{\theta})$, as $s\to-\infty$;
        \item $(0,\theta_0)$, where $K(\theta_0) = 0$, as $s\to-\infty$;
        \item $(+\infty,2\pi-\theta_0)$, where $K(\theta_0) = 0$, as $s\downarrow -s^*$ for some $s^*<+\infty$;
        \item $(-1,0)$, as $s\to-\infty$;
        \item $(-\infty,0)$, as $s\downarrow -s^*$ for some $s^*<+\infty$.
    \end{itemize}
\end{theorem}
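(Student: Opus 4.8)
This statement is the synthesis of the preceding sections, so the plan is to read the two lists directly from the topological graphs in Figure \ref{fig:top_graphs}, once we justify rigorously the two facts on which those graphs rest: that every bounded orbit converges to a single equilibrium, and that every unbounded orbit escapes to an ``$\infty$-point'' in finite reparametrized time. Throughout I would write $R,\theta$ for $\Tilde{R},\Tilde{\theta}$ and treat the forward ($s\to+\infty$) and backward ($s\to-\infty$) directions separately, obtaining the second from the first by time reversal.

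First I would handle bounded orbits in forward time. Fix $\beta$ and a solution with $R\neq0$ and $0<\theta<2\pi$. Since $\{R=0\}$, $\{\theta=0\}$, and $\{\theta=2\pi\}$ are invariant, the orbit remains in one open strip, either $\{R>0\}$ or $\{R<0\}$, with $0<\theta<2\pi$. If it is bounded, its $\omega$-limit set is nonempty, compact, connected, and invariant, so the Poincar\'e--Bendixson theorem forces it to be an equilibrium, a closed orbit, or a union of equilibria linked by connecting orbits; Theorem \ref{thm:no_cycles} rules out the last two, leaving convergence to a single equilibrium. A repeller can never be an $\omega$-limit, so the admissible forward limits are the attractors together with those saddles reached along a stable manifold from $R\neq0$. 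Reading the inward edges of Figure \ref{fig:top_graphs} (and recalling from Section \ref{section:equil} that the saddles on $\{R=0\}$ have their stable manifolds inside $\{R=0\}$), these are exactly $(1,\pi)$, $(\Bar{R},\Bar{\theta})$, $(1/\Bar{R},2\pi-\Bar{\theta})$, $(0,2\pi)$, and $(-1,2\pi)$, each taken only for those $\beta$ for which it exists.

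Next I would treat unbounded orbits, which by the invariant-region argument following Theorem \ref{thm:no_cycles} must satisfy $|R|\to\infty$. To show this happens at a finite $s^*$, I would localize to the region where $|R|$ is large and increasing and compare with a Riccati equation: by (\ref{reparam}), $R'=2(K'(0)-K'(-\theta))R^2+O(R)$, and there the leading coefficient is bounded away from $0$ with the sign that drives $|R|$ up, so $|R|$ dominates a solution of $y'=cy^2$ and blows up in finite $s$. The limiting angle is pinned down by dividing the two equations of (\ref{reparam}): $d\theta/dR$ is $O(1/R)$, so $\theta$ tends to a limit $\theta_\infty$, and for $\theta'$ to stay finite as $|R|\to\infty$ this limit must satisfy $K(0)-K(-\theta_\infty)=0$; matching with the direction of approach in the phase plane yields $(+\infty,0)$ in the case $R\to+\infty$ and $(-\infty,\theta_0)$ with $K(-\theta_0)=K(0)$ in the case $R\to-\infty$. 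I expect this to be the main obstacle, since it requires controlling $\theta$ simultaneously with the finite-time blowup of $R$ and selecting the correct $\infty$-point.

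Finally, the backward list follows by applying the identical reasoning to the time-reversed flow, under which attractors and repellers exchange roles while saddles keep their type with stable and unstable manifolds swapped. The admissible $s\to-\infty$ limits are then the repellers and the saddles reached along unstable manifolds, i.e.\ the outward edges of Figure \ref{fig:top_graphs}, namely $(1,\pi)$, $(\Bar{R},\Bar{\theta})$, $(1/\Bar{R},2\pi-\Bar{\theta})$, $(0,\theta_0)$ with $K(\theta_0)=K(0)$, and $(-1,0)$, while the finite-time escapes become the backward $\infty$-points $(+\infty,2\pi-\theta_0)$ and $(-\infty,0)$. The symmetry $(\Tilde{R},\Tilde{\theta})\leftrightarrow(1/\Tilde{R},-\Tilde{\theta})$ of Section \ref{section:reparam} identifies each $\infty$-point with a finite equilibrium on $\{R=0\}$ and confirms that the two lists are exhaustive and mutually consistent.
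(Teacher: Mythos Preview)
Your treatment of bounded orbits matches the paper's: invoke Theorem \ref{thm:no_cycles} together with Poincar\'e--Bendixson, then read the admissible $\omega$- and $\alpha$-limits from the topological graphs of Figure \ref{fig:top_graphs}, discarding saddles on $\{R=0\}$ whose stable manifolds lie in $\{R=0\}$. The backward list via time reversal is also handled the same way.

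For unbounded orbits your direct route has a real gap. The Riccati comparison needs the leading coefficient $K'(0)-K'(-\theta)$ to be bounded away from zero in the trapping region, but that region abuts the graph of $R_1$ near its vertical asymptote $\theta=2\pi-\alpha$, exactly where this coefficient vanishes (and changes sign). So you cannot get $|R'|\ge cR^2$ without first pushing $\theta$ away from $2\pi-\alpha$, and your $d\theta/dR=O(1/R)$ estimate uses the same coefficient, making the argument circular. Separately, ``for $\theta'$ to stay finite'' is not the correct mechanism (in fact $\theta'$ does blow up), and you do not explain why, among all $\theta_\infty$ with $K(-\theta_\infty)=K(0)$, only $\theta_\infty=0$ occurs as a forward $(+\infty)$-limit. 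The paper sidesteps all of this by promoting the symmetry you invoke only at the end to the main tool: for $R>0$ the map $(R,\theta)\mapsto(1/R,2\pi-\theta)$ sends orbits to orbits with the same orientation (reversed for $R<0$), so an unbounded orbit is the image of a bounded one converging to an equilibrium on $\{R=0\}$. The list of $\infty$-points, and which are forward versus backward, then drops straight out of the local types in Theorem \ref{thm:equil}; for instance $(+\infty,0)$ is the only forward $(+\infty)$-limit precisely because $(0,2\pi)$ is the only attractor on $\{R=0\}$. I would restructure your unbounded case around this symmetry rather than the Riccati comparison.
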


\subsection{Solutions to the original system}
Suppose we have a solution $(\Tilde{R}(s), \Tilde{\theta}(s))$ to the new ODE system (\ref{reparam}) and initial data for the original ODE system (\ref{original}). Then, we can recover the corresponding solution $(I_1(t), I_2(t), \theta(t))$ for the original ODE system as follows. First, recall that \[I_2' = 2K'(0)I_2^2 + 2K'(-\theta)I_1I_2.\] Suppose $I_2 > 0$ and divide both sides by $I_2$ to get \[(\log I_2)' = 2K'(0)I_2 + 2K'(-\theta)I_1.\] If we divide both sides by $I_2$ again, we have
\begin{equation} \label{I2_tilde}
    (\log\Tilde{I_2})' = 2K'(0) + 2K'(-\Tilde{\theta})\Tilde{R},
\end{equation}
where $\Tilde{I_2} = I_2\circ\phi^{-1}$. Since the functions $\Tilde{R}$ and $\Tilde{\theta}$ are given, we can determine the function $\Tilde{I_2}$ from (\ref{I2_tilde}). We can also figure out the function $\Tilde{I_1} = I_1\circ\phi^{-1}$ at the same time, since $\Tilde{I_1} / \Tilde{I_2} = \Tilde{R}$. Finally, by solving the ODE $\phi' = \Tilde{I_2}\circ\phi$, we can find out the function $\phi$, and simultaneously $I_1$, $I_2$, and $\theta$ by composition of $\phi$ with $\Tilde{I_1}$, $\Tilde{I_2}$, and $\Tilde{\theta}$, respectively. The formula for $\phi$ is
\begin{equation} \label{phi}
    \int_0^{\phi(t)} \frac{ds}{\Tilde{I_2}(s)} = t.
\end{equation}
We apply this recovering strategy to each case of Theorem \ref{thm:longtime_reparam} to get our main result, Theorem \ref{thm:longtime_original}.

\textbf{Case 1: $\lim_{s\to+\infty}(\Tilde{R}(s),\Tilde{\theta}(s)) = (1,\pi)$.} In this case, (\ref{I2_tilde}) and Lemma \ref{lemma:K'(0)<0} give $(\log\Tilde{I_2}(s))' \to 2K'(0)+2K'(\pi) < 0$ as $s \to +\infty$. So, $\log\Tilde{I_2}(s) \to -\infty$ and $\Tilde{I_2}(s) \to +0$ as $s \to +\infty$. Also, since $K'$ and $\Tilde{R}$ are bounded, $(\log\Tilde{I_2})'$ is also bounded and $\Tilde{I_2}(s) \in (0,+\infty)$ for all $s \geq 0$. Thus, (\ref{phi}) implies that $\phi(t) \to +\infty$ as $t \to +\infty$. By composition, we see that \[I_2(t)\to+0, \quad R(t)=\frac{I_1(t)}{I_2(t)}\to1, \quad \theta(t)\to\pi \quad \text{as } t\to+\infty.\] Moreover, from \[\left(\frac{1}{I_2}\right)' = -2K'(0) - 2K'(-\theta)R \to -2K'(0) - 2K'(\pi),\] we get \[I_1(t) \sim I_2(t) \sim \frac{1}{-2(K'(0)+K'(\pi))t}\] by L'Hôpital's rule.

\textbf{Case 2: $\lim_{s\to+\infty}(\Tilde{R}(s),\Tilde{\theta}(s)) = (\Bar{R},\Bar{\theta})$.} In this case, (\ref{I2_tilde}) gives $(\log\Tilde{I_2}(s))' \to 2K'(0)+2K'(-\Bar{\theta})\Bar{R}$ as $s \to \infty$. The limit is again negative, since \[K'(0)+K'(-\Bar{\theta})\Bar{R} = \frac{K'(\Bar{\theta})K'(-\Bar{\theta})-K'(0)^2}{K'(-\Bar{\theta})-K'(0)}\] and Lemma \ref{lemma:K'(0)<0} implies that the numerator is negative when the denominator is positive due to Assumption \ref{assumption:K'(x)>K'(0)} and the fact that $2\pi-\Bar{\theta} < \alpha$. Thus, through similar reasoning, we get \[I_2(t)\to+0, \quad R(t)=\frac{I_1(t)}{I_2(t)}\to\Bar{R}, \quad \theta(t)\to\Bar{\theta} \quad \text{as } t\to+\infty.\] Determining the asymptotic behavior is also similar, which yields \[I_1(t) \sim \frac{K'(\Bar{\theta})-K'(0)}{-2(K'(\Bar{\theta})K'(-\Bar{\theta})-K'(0)^2)t}\] and \[I_2(t) \sim \frac{K'(-\Bar{\theta})-K'(0)}{-2(K'(\Bar{\theta})K'(-\Bar{\theta})-K'(0)^2)t}.\]

\textbf{Case 2$'$: $\lim_{s\to+\infty}(\Tilde{R}(s),\Tilde{\theta}(s)) = (1/\Bar{R},2\pi-\Bar{\theta})$.} This case is symmetric to the one right before in the sense that if we swap $(I_1,I_2)$ and $(\theta,2\pi-\theta)$, we can move from one case to the other. Thus, the asymptotics of $I_1$ and $I_2$ are also swapped.

\textbf{Case 3: $\lim_{s\to+\infty}(\Tilde{R}(s),\Tilde{\theta}(s)) = (0,2\pi)$.} Since $\Tilde{R} \to 0$ and $K'$ is bounded, (\ref{I2_tilde}) and Lemma \ref{lemma:K'(0)<0} yield $(\log\Tilde{I_2}(s))' \to 2K'(0) < 0$ as $s \to +\infty$. Through similar reasoning, we get \[I_2(t)\to+0, \quad R(t)=\frac{I_1(t)}{I_2(t)}\to0, \quad \theta(t)\to-0 \quad \text{as } t\to+\infty\] and \[I_2(t) \sim \frac{1}{-2K'(0)t}.\] Moreover, from \[\frac{(\log|I_1|)'}{I_2} = 2K'(0)R + 2K'(\theta) \to 2K'(-0),\] we obtain \[\log|I_1|(t) \sim -\frac{K'(-0)}{K'(0)}\log t\] by L'Hôpital's rule.

\textbf{Case 3$'$: $\lim_{s\uparrow s^*}(\Tilde{R}(s),\Tilde{\theta}(s)) = (+\infty,0)$, where $s^*<+\infty$.} This case is symmetric to the one right before, so we have \[I_1(t) \sim \frac{1}{-2K'(0)t}\] and \[\log I_2(t) \sim -\frac{K'(-0)}{K'(0)}\log t.\] Note that the asymptotic behavior of $\log I_2$ implies the boundedness of $s = \int I_2$.

\textbf{Case 4: $\lim_{s\to+\infty}(\Tilde{R}(s),\Tilde{\theta}(s)) = (-1,2\pi)$.} In this case, (\ref{I2_tilde}) and Lemma \ref{lemma:K'(+0)>K'(-0)} give $(\log\Tilde{I_2}(s))' \to 2K'(0)-2K'(+0) < 0$ as $s \to +\infty$. Through similar reasoning, we get \[I_2(t)\to+0, \quad R(t)=\frac{I_1(t)}{I_2(t)}\to-1, \quad \theta(t)\to-0 \quad \text{as } t\to+\infty.\] Moreover, we obtain \[I_1(t) \sim \frac{1}{2(K'(0)-K'(+0))t}\] and \[I_2(t) \sim \frac{1}{-2(K'(0)-K'(+0))t}.\]

\textbf{Case 5: $\lim_{s\uparrow s^*}(\Tilde{R}(s),\Tilde{\theta}(s)) = (-\infty,\theta_0)$, where $s^* < +\infty$ and $K(-\theta_0) = 0$.} In this case, \[\left(\frac{1}{\Tilde{R}}\right)' = -2(K'(0)-K'(-\Tilde{\theta})) - 2(K'(\Tilde{\theta})-K'(0))\frac{1}{\Tilde{R}} \to -2(K'(0)-K'(-\theta_0)) > 0\] as $s\uparrow s^*$ by Assumption \ref{assumption:K'(x)>K'(0)}. Thus, by L'Hôpital's rule, the asymptotic behavior of $\Tilde{R}$ as $s\uparrow s^*$ is \[\Tilde{R}(s) \sim \frac{1}{-2(K'(0)-K'(-\theta_0))(s-s^*)}.\] Then (\ref{I2_tilde}) yields \[(\log\Tilde{I_2})' \sim -\frac{K'(-\theta_0)}{K'(0)-K'(-\theta_0)} \frac{1}{s-s^*}.\] Thus, applying L'Hôpital's rule again gives \[\log\Tilde{I_2}(s) \sim -\frac{K'(-\theta_0)}{K'(0)-K'(-\theta_0)}\log(s^*-s),\] where \[-\frac{K'(-\theta_0)}{K'(0)-K'(-\theta_0)} \in (-\infty,1).\] This implies \[\int_0^{s^*} \frac{1}{\Tilde{I_2}(s)}ds < +\infty.\] Thus, by (\ref{phi}), we have $\phi(t) \uparrow s^*$ as $t \uparrow t^*$ for some $t^* < +\infty$. Then, \[\quad \frac{1}{R(t)}=\frac{I_2(t)}{I_1(t)}\to-0, \quad \theta(t)\to\theta_0 \quad \text{as } t\uparrow t^*.\] Moreover, \[\left(\frac{1}{I_1}\right)' = -2K'(0) - 2K'(\theta)\frac{1}{R} \to -2K'(0)\] yields \[I_1(t) \sim \frac{1}{-2K'(0)(t-t^*)}\] as $t\uparrow t^*$. Then, \[\frac{(\log I_2)'}{I_1} \to 2K'(-\theta_0)\] gives \[\log I_2(t) \sim -\frac{K'(-\theta_0)}{K'(0)}\log(t^*-t).\] Note that $I_2 \to +0$ if $K'(-\theta_0) > 0$ but $I_2 \to +\infty$ if $K'(-\theta_0) < 0$.

The other cases, corresponding to $I_2<0$, can be handled similarly. According to the symmetry, we can consider $(-I_1,-I_2,\theta)$ instead of $(I_1,I_2,\theta)$. This means we again suppose $I_2>0$ but instead move $t$ (and accordingly $s$) in the negative direction.

\textbf{Case 6: $\lim_{s\to-\infty}(\Tilde{R}(s),\Tilde{\theta}(s)) = (1,\pi)$.} From (\ref{I2_tilde}), we have $\log\Tilde{I_2}(s) \sim 2(K'(0)+K'(\pi))s$ as $s\to-\infty$, which implies $\lim_{s\to-\infty}\Tilde{I_2}(s) = +\infty$ and also \[\int_0^{-\infty} \frac{1}{\Tilde{I_2}(s)}ds > -\infty,\] since $K'(0)+K'(\pi) < 0$. Then (\ref{phi}) yields $\phi(t)\to-\infty$ as $t\downarrow -t^*$ for some $t^*<+\infty$. By composition, we have \[I_2(t)\to+\infty, \quad R(t)=\frac{I_1(t)}{I_2(t)}\to1, \quad \theta(t)\to\pi \quad \text{as } t\downarrow -t^*.\] Moreover, an analogous argument to the case $s\to+\infty$ yields \[I_1(t) \sim I_2(t) \sim \frac{1}{-2(K'(0)+K'(\pi))(t+t^*)}\] as $t\downarrow -t^*$.

\textbf{Case 7: $\lim_{s\to-\infty}(\Tilde{R}(s),\Tilde{\theta}(s)) = (\Bar{R},\Bar{\theta})$.} Computing analogously to the case right before, we have \[I_1(t) \sim \frac{K'(\Bar{\theta})-K'(0)}{-2(K'(\Bar{\theta})K'(-\Bar{\theta})-K'(0)^2)(t+t^*)}\] and \[I_2(t) \sim \frac{K'(-\Bar{\theta})-K'(0)}{-2(K'(\Bar{\theta})K'(-\Bar{\theta})-K'(0)^2)(t+t^*)}\] as $t\downarrow -t^*$ for some $t^*<+\infty$.

\textbf{Case 7$'$: $\lim_{s\to-\infty}(\Tilde{R}(s),\Tilde{\theta}(s)) = (1/\Bar{R},2\pi-\Bar{\theta})$.} Due to symmetry with the case right before, the asymptotics of $I_1$ and $I_2$ are swapped.

\textbf{Case 8: $\lim_{s\to-\infty}(\Tilde{R}(s),\Tilde{\theta}(s)) = (0,\theta_0)$.} From (\ref{I2_tilde}), we have $\log\Tilde{I_2}(s) \sim 2K'(0)s$ as $s\to-\infty$. Since $K'(0)<0$, we again obtain $\phi(t)\to-\infty$ as $t\downarrow -t^*$ for some $t^*<+\infty$ and \[I_2(t)\to+\infty, \quad R(t)=\frac{I_1(t)}{I_2(t)}\to0, \quad \theta(t)\to\theta_0 \quad \text{as } t\downarrow -t^*.\] Moreover, an analogous argument to the case $\lim_{s\uparrow s^*}(\Tilde{R}(s),\Tilde{\theta}(s)) = (-\infty,\theta_0)$ yields \[I_2(t) \sim \frac{1}{-2K'(0)(t+t^*)}, \quad \log |I_1|(t) \sim -\frac{K'(\theta_0)}{K'(0)}\log(t+t^*)\] as $t\downarrow -t^*$. Note that $K'(-\theta_0)$ might be negative, which then implies $|I_1|(t) \to +\infty$.

\textbf{Case 8$'$: $\lim_{s\downarrow-s^*}(\Tilde{R}(s),\Tilde{\theta}(s)) = (+\infty,2\pi-\theta_0)$.} This case is symmetric to the case right before, so we get \[I_1(t) \sim \frac{1}{-2K'(0)(t+t^*)}, \quad \log I_2(t) \sim -\frac{K'(\theta_0)}{K'(0)}\log(t+t^*)\] as $t\downarrow -t^*$ for some $t^*<+\infty$. Note that the asymptotic behavior of $\log I_2$ implies the boundedness of $s = \int I_2$ as $t \downarrow -t^*$.

\textbf{Case 9: $\lim_{s\to-\infty}(\Tilde{R}(s),\Tilde{\theta}(s)) = (-1,0)$.} In this case, (\ref{I2_tilde}) yields \[\lim_{s\to-\infty}(\log\Tilde{I_2}(s))' = 2K'(0)-2K'(-0) > 0.\] This implies $\lim_{s\to-\infty}\Tilde{I_2}(s) = +0$ and $\phi(t) \to -\infty$ as $t \to -\infty$. Accordingly, we have \[I_2(t)\to+0, \quad R(t)=\frac{I_1(t)}{I_2(t)}\to-1, \quad \theta(t)\to0 \quad \text{as } t\to -\infty.\] Moreover, from \[\left(\frac{1}{I_2}\right)' = -2K'(0) - 2K'(-\theta)R \to -2K'(0) + 2K'(-0),\] we get \[I_2(t) \sim \frac{1}{-2(K'(0)-K'(-0))t}\] and \[I_1(t) \sim \frac{1}{2(K'(0)-K'(-0))t}\] as $t\to-\infty$.

\textbf{Case 10: $\lim_{s\downarrow-s^*}(\Tilde{R}(s),\Tilde{\theta}(s)) = (-\infty,0)$.} The argument is essentially analogous to that of the case $\lim_{s\uparrow s^*}(\Tilde{R}(s),\Tilde{\theta}(s)) = (-\infty,\theta_0)$. Now, \[\left(\frac{1}{\Tilde{R}}\right)' = -2(K'(0)-K'(-\Tilde{\theta})) - 2(K'(\Tilde{\theta})-K'(0))\frac{1}{\Tilde{R}} \to -2(K'(0)-K'(-0)) < 0\] as $s\downarrow -s^*$. By L'Hôpital's rule, \[\Tilde{R}(s) \sim \frac{1}{-2(K'(0)-K'(-0))(s+s^*)}.\] Then (\ref{I2_tilde}) yields \[(\log\Tilde{I_2})' \sim -\frac{K'(-0)}{K'(0)-K'(-0)} \frac{1}{s+s^*},\] where, in this case, \[-\frac{K'(-0)}{K'(0)-K'(-0)} > 1.\] Applying L'Hôpital's rule again gives \[\log\Tilde{I_2}(s) \sim -\frac{K'(-0)}{K'(0)-K'(-0)}\log(s+s^*).\] This implies $\lim_{s\downarrow -s^*}\Tilde{I_2}(s) = +0$ and also \[\int_0^{-s^*} \frac{1}{\Tilde{I_2}(s)}ds = -\infty.\] Thus, by (\ref{phi}), we have $\phi(t) \downarrow -s^*$ as $t\to-\infty$. Then, \[I_2(t)\to+0, \quad \frac{1}{R(t)}=\frac{I_2(t)}{I_1(t)}\to-0, \quad \theta(t)\to0 \quad \text{as } t\to-\infty.\] Moreover, \[\left(\frac{1}{I_1}\right)' = -2K'(0) - 2K'(\theta)\frac{1}{R} \to -2K'(0)\] yields \[I_1(t) \sim \frac{1}{-2K'(0)t}\] as $t\to-\infty$. Then, \[\frac{(\log I_2)'}{I_1} \to 2K'(-0)\] gives \[\log I_2(t) \sim -\frac{K'(-0)}{K'(0)}\log(-t).\]

Each case corresponds to one of the cases in Theorem \ref{thm:longtime_original}, so we are done.

\end{document}